\pdfoutput=1
\documentclass[11pt,reqno]{amsart}

\usepackage[margin=1.0in]{geometry}
\usepackage{amsmath,amssymb,amsthm,mathtools,mathrsfs}
\usepackage{booktabs,array}
\usepackage{xcolor}
\usepackage[T1]{fontenc}
\usepackage[utf8]{inputenc}
\usepackage[colorlinks=true,citecolor=blue!60!black,linkcolor=blue!60!black,urlcolor=blue!60!black]{hyperref}
\usepackage{comment}

\numberwithin{equation}{section}

\theoremstyle{plain}
\newtheorem{theorem}{Theorem}[section]
\newtheorem{proposition}[theorem]{Proposition}
\newtheorem{lemma}[theorem]{Lemma}
\newtheorem{corollary}[theorem]{Corollary}

\theoremstyle{definition}
\newtheorem{definition}[theorem]{Definition}

\newtheorem{assumption}[theorem]{Assumption}

\theoremstyle{remark}
\newtheorem{remark}[theorem]{Remark}

\title[Quadratic one-forms on logarithmic Higgs moduli]{Quadratic one-forms on logarithmic Higgs moduli}
\author{Sumit Roy}
\address{Stat-Math Unit, Indian Statistical Institute, 203 B.T. Road, Kolkata 700 108, India.}
 \email{sumitroy\_r@isical.ac.in}
\thanks{E-mail: sumit.roy061@gmail.com}
\thanks{Address: Stat-Math Unit, Indian Statistical Institute, 203 B.T. Road, Kolkata 700 108, India.}
\subjclass[2020]{14H60, 14D20, 32G15, 53C07}
\keywords{Logarithmic Higgs bundles, moduli spaces, Hitchin fibration, Teichmuller space}

\begin{document}

\begin{abstract}
Let \(C\) be a compact Riemann surface of genus at least two, and let \(G\) be a connected complex reductive group. We study quadratic one-forms associated to logarithmic \(G\)-Higgs bundles on a pointed curve \((C,D)\) with nilpotent residues. We use the elementary pole cancellation for invariant polynomials, where nilpotency of the residue removes the leading pole term. In degree two this places \(B(\Phi,\Phi)\) in the cotangent space of pointed Teichmuller space, and hence gives a logarithmic quadratic one-form. We relate this one-form to the variation of the energy for tame nilpotent harmonic bundles. The energy formula is proved under a positive decay assumption near the punctures. We also write the corresponding compact formulae for classical groups and standard real forms.
\end{abstract}

\maketitle

\section{Introduction}

Let \(C\) be a compact Riemann surface of genus \(g\geq2\), and let \(G\) be a
connected complex reductive group with Lie algebra \(\mathfrak g\).  We consider
the Betti moduli space
\[
        \mathcal M_B(G)
        =
        \operatorname{Hom}(\pi_1(C),G)^{\mathrm{red}}/G
\]
and the Dolbeault moduli space \ space
\[
        \mathcal M_B(G)
        =
        \operatorname{Hom}(\pi_1(C),(\mathcal M_{\mathrm{Dol}}(G,C)
        \] of
polystable principal \(G\)-Higgs bundles.  On smooth stable loci these spaces
are identified by nonabelian Hodge theory
\cite{Hitchin1987,Donaldson,Corlette,Simpson1992,Simpson1994I,Simpson1994II}.
The Betti moduli space depends only on the topology of the surface, while the
Dolbeault moduli space depends on the complex structure of \(C\).

Hitchin studied this dependence over Teichmuller space in
\cite{HitchinUniversal}.  For \(G=GL(n,\mathbb C)\), the one-form which appears
in the first variation of the energy is
\[
        \phi(Y)
        =
        -\frac12\int_C\operatorname{tr}(\Phi^2)\mu_Y,
        \qquad
        Y\in T_C\mathcal T_g,
\]
where \(\Phi\) is the Higgs field and \(\mu_Y\) is a Beltrami representative of
\(Y\).  If \(f\) is the energy, then
\[
        Y(f)=\operatorname{Re}\phi(Y).
\]
Hitchin also averages the circle action
\((E,\Phi)\mapsto(E,e^{i\theta}\Phi)\), and obtains a connection satisfying
\[
        d_A\phi=0,\qquad
        \{\phi,\phi\}=0,\qquad
        F_A+\frac18\{\phi,\bar\phi\}=0.
\]

We first write this compact calculation for a connected complex reductive group
using a fixed invariant nondegenerate symmetric bilinear form
\[
        B:\mathfrak g\otimes\mathfrak g\longrightarrow\mathbb C.
\]
The corresponding quadratic one-form is
\[
        \phi_G(Y)
        =
        -\frac12\int_C B(\Phi,\Phi)\mu_Y.
\]
For the standard representations of
\(GL(n,\mathbb C)\), \(Sp(2n,\mathbb C)\), and \(SO(n,\mathbb C)\), we take
\(B(X,Y)=\operatorname{tr}(XY)\).  With this normalization, Hitchin's
Fourier-coefficient argument gives the same averaged-connection identities.  We
also write the corresponding formulae for the standard real forms
\(SL(2,\mathbb R)\), \(Sp(2n,\mathbb R)\), \(SO(p,q)\), and \(SU(p,q)\), using
the usual descriptions of their Higgs bundles
\cite{Hitchin1992,GPGM,ABCO,GPR}.

The main part of the paper concerns logarithmic Higgs bundles on a pointed
curve.  Let \(D=p_1+\cdots+p_r\) be a reduced divisor on \(C\).  A logarithmic
\(G\)-Higgs bundle on \((C,D)\) is a pair \((P,\Phi)\), where \(P\) is a
holomorphic principal \(G\)-bundle and
\[
        \Phi\in H^0(C,\operatorname{ad}(P)\otimes K_C(D)).
\]
Near \(p_i\), after choosing a coordinate \(z\) centred at \(p_i\), we can write
\[
        \Phi=
        \left(
        \frac{N_i}{z}+A_i(z)
        \right)dz,
        \qquad
        N_i=\operatorname{Res}_{p_i}(\Phi),
\]
with \(A_i(z)\) holomorphic.  We work with nilpotent residues.  This includes,
for example, strongly parabolic Higgs fields in the usual vector-bundle case
\cite{MehtaSeshadri,Yokogawa,BodenYokogawa}.

The local algebraic point is the following.  Let
\(p\in\mathbb C[\mathfrak g]^G\) be homogeneous of degree \(d\).  For a general
logarithmic Higgs field, \(p(\Phi)\) may have a pole of order \(d\) at a marked
point.  If the residue \(N_i\) is nilpotent, the leading coefficient vanishes.
Indeed, locally
\[
        p(\Phi)=z^{-d}p(N_i+zA_i(z))\,dz^d,
\]
and the leading term is \(p(N_i)\).  By the Jacobson--Morozov theorem, every
homogeneous invariant polynomial of positive degree vanishes on nilpotent
elements.  Thus Theorem~\ref{thm:invpoly} gives
\[
        p(\Phi)\in H^0\bigl(C,K_C^d((d-1)D)\bigr).
\]
This gives, for any connected complex reductive \(G\), the smaller pole range
for the logarithmic Hitchin map on the nilpotent-residue locus.

For the quadratic invariant \(p(X)=\frac12B(X,X)\), the estimate gives
\[
        B(\Phi,\Phi)\in H^0(C,K_C^2(D)).
\]
This is exactly the cotangent space of pointed Teichmuller space at \((C,D)\)
\[
        T^*_{(C,D)}\mathcal T_{g,r}
        \cong
        H^0(C,K_C^2(D)).
\]
Therefore \(-\frac12B(\Phi,\Phi)\) defines a logarithmic quadratic one-form
\(\phi_{G,\log}\) over pointed Teichmuller space.  If
\(Y\in H^1(C,T_C(-D))\) is represented in \v{C}ech--Dolbeault form by
\((\mu,\xi_1,\ldots,\xi_r)\), then
\[
        \phi_{G,\log}(Y)
        =
        -\frac12
        \left[
        \int_{C^\circ}B(\Phi,\Phi)\mu
        +
        \sum_{i=1}^{r}
        \operatorname{Res}_{p_i}
        \bigl(B(\Phi,\Phi)\xi_i\bigr)
        \right],
        \qquad C^\circ=C\setminus D.
\]

The pole estimate is also checked against two standard families of invariants.
For the trace invariants of \(GL(n,\mathbb C)\), the pole order \(k-1\) can
actually occur.  In other words, there are local logarithmic Higgs fields with
nilpotent residue for which \(\operatorname{tr}(\Phi^k)\) has pole order
exactly \(k-1\).  For \(SO(2m,\mathbb C)\), the Pfaffian satisfies
\[
        \operatorname{Pf}(\Phi)\in H^0(C,K_C^m((m-1)D)).
\]
If the residues also satisfy \(\operatorname{rk}N_i\leq 2m-4\) at every marked
point, then the Pfaffian loses one more pole order:
\[
        \operatorname{Pf}(\Phi)\in H^0(C,K_C^m((m-2)D)).
\]
The meromorphic Hitchin fibration over pointed curves is studied in
\cite{DonagiFernandezHerrero}.

The last part of the paper proves the logarithmic energy formula.  We use tame
nilpotent harmonic bundles on \(C^\circ=C\setminus D\) and the asymptotic
estimates of Simpson and Mochizuki
\cite{Simpson1990,MochizukiNilpotent}.  For deformations with fixed Betti
representation and fixed local monodromy conjugacy classes, we assume that the
infinitesimal complex gauge term has positive decay in the cusp scale.  This is
the only extra analytic assumption in the paper.  Under this assumption,
\[
        Y(f_{\mathrm{par}})
        =
        \operatorname{Re}\phi_{G,\log}(Y).
\]
The decay condition is used only to make the boundary terms from the
integration by parts vanish.  The pole estimate and the construction of
\(\phi_{G,\log}\) do not use it.  The same fixed-local-monodromy condition is
natural on the Betti side, where it gives the symplectic leaves of the Poisson
structure on open character varieties \cite{PantevToen}.  For real groups over
punctured curves, see \cite{BiquardGPM,GPLM}; for asymptotic geometry in the
parabolic \(SL(2,\mathbb C)\) case, see \cite{FMSW}.

The paper is organized as follows.  Section~\ref{sec:compact} contains the
compact quadratic one-form and the averaged connection.  Section
\ref{sec:classical} treats the classical groups, real forms, and the Pfaffian
Hamiltonian.  Section~\ref{sec:prelim} fixes the pointed-curve, parabolic, and
Serre-duality notation.  Section~\ref{sec:invpoly} proves the pole estimate for
invariant polynomials, gives the trace-invariant example, and proves the
Pfaffian estimate.  Section~\ref{sec:logoneform} constructs
\(\phi_{G,\log}\).  Section~\ref{sec:energy} proves the logarithmic
energy-variation formula under the cusp decay assumption.

\section{The compact quadratic one-form}
\label{sec:compact}

Let \(C\) be a compact Riemann surface of genus \(g\geq 2\) with the canonical bundle $K_C$, and let \(G\) be a connected complex reductive group with Lie algebra
\(\mathfrak g\). We fix an invariant nondegenerate symmetric bilinear form
\[
        B:\mathfrak g\otimes\mathfrak g\longrightarrow\mathbb C,
\]
which is used to define the
Atiyah-Bott-Goldman symplectic form, the energy functional, and the quadratic
Hamiltonian. For the
standard classical groups we consider $B(X,Y)=\operatorname{tr}(XY)$
in the defining representation.

\subsection{Higgs bundles and moduli spaces}
For a holomorphic principal \(G\)-bundle $P\longrightarrow C,$
the adjoint action of \(G\) on \(\mathfrak g\) gives the associated adjoint
bundle
\[
        \operatorname{ad}(P)
        =
        P\times^G\mathfrak g .
\]

\begin{definition}
A principal \(G\)-Higgs bundle on \(C\) is a pair
\(
        (P,\Phi),
\)
where \(P\) is a holomorphic principal \(G\)-bundle and
\(
        \Phi\in H^0(C,\operatorname{ad}(P)\otimes K_C)
\)
is a holomorphic section, called the Higgs field.
\end{definition}

For \(G=GL(n,\mathbb C)\), this is the usual Higgs bundle
\(
        (E,\Phi),
\) with
\(
        \Phi\in H^0(C,\operatorname{End}(E)\otimes K_C).
\) 
For \(G=SL(n,\mathbb C)\), we also have
\(
        \det E\simeq\mathcal O_C \text{ and }
        \operatorname{tr}(\Phi)=0.
\)
The topological type of a principal \(G\)-bundle over \(C\) is indexed by an
element
\(
        d\in \pi_1(G).
\)
We fix such a topological type throughout. When \(G\) is semisimple,
\(\pi_1(G)\) is finite.

Let \(Q\subset G\) be a
maximal parabolic subgroup, and let
\(
        P_Q\subset P
\)
be a holomorphic reduction of structure group to \(Q\). The reduction is said
to be preserved by the Higgs field if
\[
        \Phi\in H^0(C,\operatorname{ad}(P_Q)\otimes K_C)
\]
under the natural inclusion
\(
        \operatorname{ad}(P_Q)\subset \operatorname{ad}(P).
\)

\begin{definition}
The principal Higgs bundle \((P,\Phi)\) is called \textit{stable} (resp. \textit{semistable}), if
for every such \(\Phi\)-invariant reduction to a maximal parabolic subgroup we have
\[
    \deg(\operatorname{ad}(P_Q))<0
        \quad
        (\text{resp. } \leq 0).
\]
\end{definition}

Let
\(
        \mathcal M^d(G)
\)
be the moduli space of semistable holomorphic principal \(G\)-bundles over
\(C\) of topological type \(d\). It is a normal projective variety, and on the stable locus,
\[
        \dim_{\mathbb C}\mathcal M^d(G)
        =
        (g-1)\dim_{\mathbb C}G+\dim_{\mathbb C}Z(G),
\]
where \(Z(G)\) denotes the centre of \(G\).  We denote by
\(
        \mathcal M_{\mathrm{Dol}}^d(G,C)
\)
the smooth stable locus of the moduli space of polystable principal
\(G\)-Higgs bundles of the same topological type.

For \((P,\Phi) \in \mathcal M_{\mathrm{Dol}}^d(G,C)\), the tangent space is the first
hypercohomology of the deformation complex
\[
        \operatorname{ad}(P)
        \xrightarrow{[\Phi,\cdot]}
        \operatorname{ad}(P)\otimes K_C.
\]
Thus
\[
        T_{(P,\Phi)}\mathcal M_{\mathrm{Dol}}^d(G,C)
        \cong
        \mathbb H^1\left(
        C,\operatorname{ad}(P)
        \xrightarrow{[\Phi,\cdot]}
        \operatorname{ad}(P)\otimes K_C
        \right).
\]
On this smooth stable locus,
\[
        \dim_{\mathbb C}\mathcal M_{\mathrm{Dol}}^d(G,C)
        =
        2(g-1)\dim_{\mathbb C}G+2\dim_{\mathbb C}Z(G).
\]
In particular, if \(G\) is semisimple, then
\[
        \dim_{\mathbb C}\mathcal M_{\mathrm{Dol}}^d(G,C)
        =
        2(g-1)\dim_{\mathbb C}G.
\]

For the construction of the moduli spaces of principal bundles we refer to
Ramanathan \cite{Ramanathan, Ramanathan1996}. For Higgs bundles and the nonabelian Hodge
correspondence, see Simpson \cite{Simpson1992,Simpson1994I,Simpson1994II}. The corresponding Betti moduli space is the character variety
\[
        \mathcal M_B(G)
        =
        \operatorname{Hom}(\pi_1(C),G)^{\mathrm{red}}/G.
\]
By nonabelian Hodge theory, the smooth stable Dolbeault locus is identified
with the smooth locus of the character variety. This is obtained by solving
Hitchin's equations and passing from a harmonic metric to a flat connection
\cite{Hitchin1987,Donaldson,Corlette,Simpson1992,Simpson1994I,Simpson1994II}.
The Betti moduli space carries the Atiyah--Bott--Goldman symplectic form
defined using \(B\) \cite{AB,Goldman}. Under the nonabelian Hodge theory this is
one of the Kähler forms of the hyperkähler metric on the smooth Higgs moduli
space. For related constructions of Kähler forms on Higgs-bundle moduli spaces, see
\cite{BiswasSchumacher}.

\subsection{Hitchin's equations and the energy}

Choose a maximal compact subgroup \(K\subset G\). A \(K\)-reduction of \(P\)
defines a Chern connection \(A\), and it also defines an adjoint operation on
\(\operatorname{ad}(P)\). We denote this adjoint by \(\tau\).  For
\(G=GL(n,\mathbb C)\), if
\(
        \Phi=A(z)\,dz,
\)
then
\[
        \tau(\Phi)=A(z)^*\,d\bar z,
\]
where \(A(z)^*\) is the Hermitian adjoint.  In general,
\(
        \tau(e^{i\theta}\Phi)=e^{-i\theta}\tau(\Phi).
\) Let \(A\) be the Chern connection associated to the \(K\)-reduction, and let
\[
        F_A\in\Omega^2(C,\operatorname{ad}(P))
\]
be its curvature. Its \((0,1)\)-part is denoted by \(\bar\partial_A\). The
Hitchin equations are
\[
        F_A+[\Phi,\tau(\Phi)]=0,
        \qquad
        \bar\partial_A\Phi=0.
\]
Here the bracket combines the Lie bracket on \(\operatorname{ad}(P)\) with the
wedge product of forms. If we have
\(
        \varphi=\Phi+\tau(\Phi),
\)
then the associated flat connection is
\(
        \nabla=d_A+\varphi.
\)
Then the Hitchin equations imply
\[
        d_A\varphi=0,
        \qquad
        d_A{*}\varphi=0.
\]
We will use Hitchin's sign convention for the energy, i.e.
\begin{equation}
        f_G
        =
        -i\int_C B(\Phi,\tau(\Phi))
        =
        -\frac12\int_C B(\varphi,*\varphi).
        \label{eq:compact-energy}
\end{equation}
For \(G=GL(n,\mathbb C)\) with \(B(X,Y)=\operatorname{tr}(XY)\), this is the
normalization used in Hitchin's universal construction
\cite[Sec.~2]{HitchinUniversal}. The invariant form \(B\) defines the Atiyah-Bott-Goldman symplectic form on
the Betti moduli space.  Under nonabelian Hodge theory this is one of the
Kähler forms of the hyperkähler metric on the smooth Higgs moduli space, and we
denote it by \(\omega_1\). With respect to \(\omega_1\), the function \(f_G\) is the moment map for the
circle action
\[
        (P,\Phi)\longmapsto (P,e^{i\theta}\Phi).
\]
Indeed, the adjoint transforms by
\[
        \tau(e^{i\theta}\Phi)=e^{-i\theta}\tau(\Phi),
\]
so the circle action preserves Hitchin's equations. Let
\(
        \mathcal T=\mathcal T_g
\)
be the Teichmuller space of compact Riemann surfaces of genus \(g\). At \(C\in\mathcal T\), its tangent space is
\[
        T_C\mathcal T
        \cong
        H^1(C,T_C)
        \cong
        H^1(C,K_C^{-1}).
\]
Let
\(
        Y\in T_C\mathcal T
\)
be represented by a Beltrami differential
\(
        \mu_Y\in\Omega^{0,1}(C,T_C).
\)
The Higgs field gives a quadratic differential
\(
        B(\Phi,\Phi)\in H^0(C,K_C^2).
\)
Locally, if
\(
        \Phi=s(z)\,dz,
\)
then
\[
        B(\Phi,\Phi)=B(s(z),s(z))\,dz^2.
\]
Contracting with \(\mu_Y\) gives a \((1,1)\)-form, and we define
\begin{equation}
        \phi_G(Y)
        =
        -\frac12\int_C B(\Phi,\Phi)\,\mu_Y.
        \label{eq:compact-oneform}
\end{equation}
This is a complex-valued one-form on Teichmuller space with values in fibrewise holomorphic functions on the Higgs moduli space. For \(G=GL(n,\mathbb C)\), it is Hitchin's quadratic one-form in \cite[Sec.~2]{HitchinUniversal}.

\begin{proposition}
\label{prop:compact-variation}
Let the complex structure of \(C\) vary in the direction
\(Y\in T_C\mathcal T\), while the corresponding point of the Betti moduli space
is fixed.  Equivalently, the flat connection
\(
        \nabla=d_A+\varphi
\)
is fixed. Then
\[
        Y(f_G)
        =
        \operatorname{Re}\phi_G(Y)
        =
        -\frac12\operatorname{Re}
        \int_C B(\Phi,\Phi)\,\mu_Y.
\]
\end{proposition}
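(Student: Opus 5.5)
We follow Hitchin's argument in \cite[Sec.~2]{HitchinUniversal}, replacing the trace by \(B\). Fix the flat connection \(\nabla\) on a fixed smooth principal \(G\)-bundle over the underlying smooth surface \(\Sigma\). The energy depends only on the harmonic \(K\)-reduction and the complex structure. It can therefore be written as \(f=-\frac12\int_\Sigma B(\varphi,*\varphi)\), where \(\varphi\) is the self-adjoint part of \(\nabla\) relative to the harmonic reduction \(h\), and \(*\) is the Hodge star of the conformal structure. The key observation is variational. Among all \(K\)-reductions \(h\) (with \(\nabla\) fixed), the harmonic one is a critical point of the functional \(E_J(h)=-\frac12\int B(\varphi_h,*_J\varphi_h)\), since the Euler--Lagrange equation is exactly \(d_A{*}\varphi=0\). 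Hence, when \(J\) varies along a path \(J_t\) with \(\dot J\) determined by \(\mu_Y\), the first variation of \(f(t)=E_{J_t}(h_t)\) is
\[
        \frac{d}{dt}\Big|_{t=0} f(t)
        = -\frac12\int_\Sigma B\bigl(\varphi,\dot{*}\,\varphi\bigr),
\]
because the term involving \(\dot h\) vanishes by harmonicity. This is the first step. To justify it, we compute \(\frac{d}{ds}E_J(e^{s\psi}h)\) for a self-adjoint section \(\psi\). This variation is \(\varphi\mapsto\varphi+d_A\psi+\dots\), and after integrating by parts it gives \(\int B(\psi,d_A{*}\varphi)=0\). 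Here we use that \(\Sigma\) is compact, so there are no boundary terms.

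The second step is the pointwise linear algebra of \(\dot*\) on one-forms. On a surface, \(*\) acts on \(1\)-forms as \(-i\) on \((1,0)\)-forms and \(+i\) on \((0,1)\)-forms, so \(*=-J^{T}\) in suitable conventions. Deforming \(J\) by a Beltrami differential \(\mu=\mu(z)\,d\bar z\otimes\partial_z\) moves \(dz\) to \(dz+t\mu\, d\bar z\). Differentiating \(*\) then shows that \(\dot*\) maps \((1,0)\)-forms to \((0,1)\)-forms via contraction with \(\mu\), and \((0,1)\)-forms to \((1,0)\)-forms via \(\bar\mu\), each multiplied by a factor \(\pm 2i\). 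Writing \(\varphi=\Phi+\tau(\Phi)\), we find that \(B(\varphi,\dot*\varphi)\) only picks up the terms \(B(\Phi,\dot*\Phi)\) and \(B(\tau\Phi,\dot*\tau\Phi)\). The first is a multiple of \(B(\Phi,\Phi)\mu\) and the second is its complex conjugate, up to sign; the conjugation property comes from the compatibility of \(\tau\) with \(B\), i.e.\ \(B(\tau X,\tau Y)=\overline{B(X,Y)}\) for the compact real form. The mixed terms vanish by type: a \((1,1)\)-form needs one \(dz\) and one \(d\bar z\), and \(\dot*\) swaps types. Thus the first variation becomes \(c\,\mathrm{Re}\int B(\Phi,\Phi)\mu_Y\) for a universal constant \(c\).

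The third step, and the main obstacle, is fixing the constant and sign so that \(c=-\frac12\). This requires matching the identification \(T_C\mathcal T\cong H^1(C,T_C)\) with the direction of deformation of \(J\), the orientation convention in \(\int B(\Phi,\Phi)\mu\), and the normalization \(f_G=-i\int B(\Phi,\tau\Phi)\). We will carry out this check in a local model by taking \(J_t\) with \(dz_t=dz+t\mu\,d\bar z\) and computing directly. As a consistency check, for \(G=GL(n,\mathbb C)\) and \(B=\operatorname{tr}\) the result must reproduce Hitchin's formula \(Y(f)=\operatorname{Re}\phi(Y)\). Since the argument is pointwise in the Lie algebra and uses only invariance of \(B\) and \(\tau\)-compatibility, the reductive case follows verbatim. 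Finally, the answer does not depend on the choice of Beltrami representative \(\mu_Y\): changing \(\mu_Y\) by \(\bar\partial v\) changes the integral by \(\int B(\Phi,\Phi)\bar\partial v=0\), by Stokes' theorem and the holomorphicity of \(B(\Phi,\Phi)\).
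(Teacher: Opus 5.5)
Your argument is essentially the paper's proof, which is Hitchin's first-variation computation: the change of harmonic reduction drops out because its Euler--Lagrange equation is \(d_A{*}\varphi=0\) on the closed surface, leaving \(-\frac12\int_C B(\varphi,\dot{*}\,\varphi)\), which is then evaluated through \(\mu_Y\). The differences are cosmetic: by fixing \(\nabla\) on a fixed smooth bundle you vary only the self-adjoint part, so the compact gauge term \(\psi_1\) (which the paper removes by \(\operatorname{Ad}\)-invariance) never appears; and, exactly as in the paper, the constant \(-\frac12\) and the reality of \(B\) on \(\mathfrak k\) behind \(B(\tau X,\tau Y)=\overline{B(X,Y)}\) are taken from Hitchin's normalization rather than computed.
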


\begin{proof}
The proof follows Hitchin's first-variation calculation
\cite[Sec.~2]{HitchinUniversal}.  Since the Betti point is fixed, the
first-order change of the flat connection is represented by an infinitesimal
complex gauge transformation.  Thus one may write
\[
        \dot A=d_A\psi_1+[\varphi,\psi_2],
        \qquad
        \dot\varphi=d_A\psi_2+[\varphi,\psi_1],
\]
where \(\psi_1\) is compact-valued and \(\psi_2\) is self-adjoint.
Differentiating \eqref{eq:compact-energy} gives
\[
        \dot f_G
        =
        -\frac12\int_C B(\dot\varphi,*\varphi)
        -
        \frac12\int_C B(\varphi,\dot *\,\varphi)
        -
        \frac12\int_C B(\varphi,*\dot\varphi).
\]
The terms containing \(d_A\psi_2\) vanish by integration by parts, using
\[
        d_A\varphi=0,
        \qquad
        d_A{*}\varphi=0.
\]
The terms containing \([\varphi,\psi_1]\) vanish by the
\(\operatorname{Ad}\)-invariance of \(B\), together with the symmetry of the
pairing of one-forms defined by \(\alpha\wedge *\beta\).  Hence
\[
        \dot f_G
        =
        -\frac12\int_C B(\varphi,\dot *\,\varphi).
\]
The variation of the Hodge star in the direction represented by \(\mu_Y\) gives
\[
        \dot f_G
        =
        -\frac12\operatorname{Re}
        \int_C B(\Phi,\Phi)\,\mu_Y.
\]
Therefore
\[
        Y(f_G)=\operatorname{Re}\phi_G(Y). \qedhere
\]
\end{proof}

\subsection{The averaged connection}

The character variety \(\mathcal M_B(G)\) does not depend on the complex
structure of \(C\). Hence over Teichmuller space we have the product
fibration
\[
        \mathcal M_B(G)\times\mathcal T
        \longrightarrow
        \mathcal T.
\]
Let \(\nabla_B\) be its product flat connection. The parallel transport for
\(\nabla_B\) keeps the Betti representation fixed and changes only the complex
structure of the curve. For each \(C\in\mathcal T\), nonabelian Hodge theory identifies
\[
        \mathcal M_B(G)
        \cong
        \mathcal M_{\mathrm{Dol}}(G,C).
\]
The circle action
\(
        (P,\Phi)\longmapsto(P,e^{i\theta}\Phi)
\)
therefore induces an action on the fixed Betti fibre, and by applying this
circle action to the product connection we get a circle of flat symplectic
connections on
\[
        \mathcal M_B(G)\times\mathcal T
        \longrightarrow
        \mathcal T.
\]
Their average will be denoted by \(\nabla_A\). If
\(
        \phi_G=\beta_G+i\gamma_G,
\)
where \(\beta_G\) and \(\gamma_G\) are real Hamiltonian-valued one-forms on
\(\mathcal T\), then
\[
        \nabla_A=\nabla_B-\frac12\gamma_G .
\]
This is the averaging formula of Hitchin \cite[Sec.~3]{HitchinUniversal}. The corresponding circle of flat connections can be written as
\begin{equation}
        \nabla_\theta
        =
        \nabla_A
        -
        \frac{i}{4}
        \left(
        e^{2i\theta}\phi_G
        -
        e^{-2i\theta}\bar\phi_G
        \right),
        \qquad
        \theta\in\mathbb R/2\pi\mathbb Z.
        \label{eq:theta-connection}
\end{equation}
Here \(\phi_G\) and \(\bar\phi_G\) are Hamiltonian-valued one-forms, and hence
vertical vector-field-valued one-forms after using \(\omega_1\). For Hamiltonian functions \(h_1,h_2\) on a fibre, with
\(
        \iota_{X_{h_i}}\omega_1=dh_i,
\)
we write
\(
        \{h_1,h_2\}
        =
        \omega_1(X_{h_1},X_{h_2}).
\)
For Hamiltonian-valued differential forms on the base, this bracket is combined
with the exterior product of the base forms.

\begin{theorem}
\label{thm:compact-flatness}
On the smooth stable locus of
\(
        \mathcal M_B(G)\times\mathcal T\longrightarrow\mathcal T
\)
where the nonabelian Hodge correspondence identifies the Betti and Dolbeault
moduli spaces, the averaged connection satisfies
\[
        d_A\phi_G=0,
        \qquad
        \{\phi_G,\phi_G\}=0,
        \qquad
        F_A+\frac18\{\phi_G,\bar\phi_G\}=0.
\]
\end{theorem}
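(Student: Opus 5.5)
The plan is to follow Hitchin's Fourier-coefficient argument \cite[Sec.~3]{HitchinUniversal}, using only two inputs: that every member of the circle of connections \eqref{eq:theta-connection} is flat, and Proposition~\ref{prop:compact-variation}. Each connection \(\nabla_\theta\) is the image of the product connection \(\nabla_B\) under the circle action \((P,\Phi)\mapsto(P,e^{i\theta}\Phi)\). That action is a fibrewise symplectomorphism for \(\omega_1\), because \(f_G\) is its moment map. Hence \(\nabla_\theta\) is flat for every \(\theta\). Moreover \(\nabla_\theta-\nabla_A\) is a Hamiltonian-valued one-form, so its curvature can be computed with the Poisson bracket convention fixed before the theorem. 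Before expanding, I would check that the averaged connection has the form stated: the \(e^{\pm 2i\theta}\) dependence in \eqref{eq:theta-connection} comes from \(B(e^{i\theta}\Phi,e^{i\theta}\Phi)=e^{2i\theta}B(\Phi,\Phi)\), and Proposition~\ref{prop:compact-variation} identifies the Hamiltonian of \(\nabla_B-\nabla_A\) with \(-\frac12\gamma_G\).

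With \(\alpha_\theta=-\frac{i}{4}(e^{2i\theta}\phi_G-e^{-2i\theta}\bar\phi_G)\), the curvature expands as
\[
        F_{\nabla_\theta}=F_A+d_A\alpha_\theta+\tfrac12\{\alpha_\theta,\alpha_\theta\},
\]
with the factor and sign fixed once the Lie bracket of Hamiltonian vector fields is matched with \(\{h_1,h_2\}=\omega_1(X_{h_1},X_{h_2})\). I would then expand this in Fourier modes:
\begin{itemize}
\item modes \(e^{\pm2i\theta}\) give \(d_A\phi_G\) and its conjugate;
\item modes \(e^{\pm4i\theta}\) give \(-\frac1{32}\{\phi_G,\phi_G\}\) and its conjugate;
\item the zero mode gives \(F_A\) plus the cross term \(\{\phi_G,\bar\phi_G\}\).
\end{itemize}
Since \(F_{\nabla_\theta}=0\) for all \(\theta\), every Fourier coefficient vanishes. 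The \(\pm2\) and \(\pm4\) modes give \(d_A\phi_G=0\) and \(\{\phi_G,\phi_G\}=0\) directly. For the zero mode, the cross term is \(\frac{1}{16}\cdot 2\cdot\frac12\{\phi_G,\bar\phi_G\}\), taking the sign from the \(i\cdot(-i)\) product, which gives \(F_A+\frac18\{\phi_G,\bar\phi_G\}=0\).

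The group \(G\) enters only through the normalization of \(\phi_G\) by \(B\). Proposition~\ref{prop:compact-variation} uses the same \(B\) that defines \(\omega_1\), so the Hamiltonian correspondence is exactly the one in Hitchin's \(GL(n,\mathbb C)\) case, and the argument carries over verbatim.

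The main obstacle is bookkeeping of constants and signs. One must check that \eqref{eq:theta-connection} is really the pushforward of \(\nabla_B\) under rotation by \(\theta\). This requires knowing that the Hamiltonian vector field of \(\operatorname{Re}\phi_G(Y)\), equal to \(Y(f_G)\) by Proposition~\ref{prop:compact-variation}, generates the difference between \(\nabla_B\) and the rotated connections. The most delicate point is the factor \(\frac18\), which depends on the convention relating the curvature of a Hamiltonian-valued connection form to the bracket \(\{\cdot,\cdot\}\). I would fix this convention by checking it first on Hitchin's \(GL(n,\mathbb C)\) normalization and then transporting it to general \(B\).
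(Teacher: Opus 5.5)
Your route is the paper's: every \(\nabla_\theta\) in \eqref{eq:theta-connection} is flat, the curvature expands as \(F_{\nabla_\theta}=F_A+d_A\alpha_\theta+\frac12\{\alpha_\theta,\alpha_\theta\}\), and you compare the \(e^{2i\theta}\), \(e^{4i\theta}\) and constant Fourier coefficients. The first two identities follow exactly as you say, and since they are homogeneous they do not depend on normalisations.

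The step that fails as written is the constant in the third identity. For Hamiltonian-valued one-forms, the bracket combined with the wedge product is symmetric, \(\{\bar\phi_G,\phi_G\}=\{\phi_G,\bar\phi_G\}\). So the zero mode of \(\frac12\{\alpha_\theta,\alpha_\theta\}\) is \(\frac12\cdot2\cdot\bigl(-\frac{i}{4}\bigr)\bigl(\frac{i}{4}\bigr)\{\phi_G,\bar\phi_G\}=\frac1{16}\{\phi_G,\bar\phi_G\}\). This is exactly what your own product \(\frac1{16}\cdot2\cdot\frac12\) evaluates to, and it matches the coefficient \(-\frac1{32}\) you found at \(e^{4i\theta}\). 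With the conventions you fixed, the argument therefore gives \(F_A+\frac1{16}\{\phi_G,\bar\phi_G\}=0\), not \(\frac18\).

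The factor \(\frac18\) only appears under a normalisation in which the curvature of \(\nabla_A+\alpha\) is written \(F_A+d_A\alpha+\{\alpha,\alpha\}\). An example is defining the bracket of two one-forms with an extra factor \(\frac12\). The paper's proof skips the same point: it writes \(\frac12\{a_\theta,a_\theta\}\) and simply asserts that the constant coefficient gives \(\frac18\). So this is not a defect of your strategy. You do, however, need to state the bracket convention explicitly and make the arithmetic agree with it, rather than claim \(\frac18\) from a product equal to \(\frac1{16}\).

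There is also a smaller sign slip. From \(\nabla_A=\nabla_B-\frac12\gamma_G\), or by setting \(\theta=0\) in \eqref{eq:theta-connection}, one gets \(\nabla_B-\nabla_A=+\frac12\gamma_G\), not \(-\frac12\gamma_G\).
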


\begin{proof}
For \(G=GL(n,\mathbb C)\) and \(B(X,Y)=\operatorname{tr}(XY)\), these are the
identities obtained by Hitchin from the circle family of flat connections
\cite[Sec.~3]{HitchinUniversal}. The same computation applies here. Indeed, the circle action gives
\[
        B(e^{i\theta}\Phi,e^{i\theta}\Phi)
        =
        e^{2i\theta}B(\Phi,\Phi),
\]
and the form \(B\) is the same one used in the symplectic form, in the energy,
and in the Hamiltonian function \(\phi_G\). Therefore the Hamiltonian one-form
has circle weight two. Let
\[
        a_\theta
        =
        -
        \frac{i}{4}
        \left(
        e^{2i\theta}\phi_G
        -
        e^{-2i\theta}\bar\phi_G
        \right),
        \qquad
        \nabla_\theta=\nabla_A+a_\theta .
\]
The connections \(\nabla_\theta\) are flat for all \(\theta\), since they are
obtained from the product flat connection by the circle action. Hence
\[
        0
        =
        F_{\nabla_\theta}
        =
        F_A+d_Aa_\theta+\frac12\{a_\theta,a_\theta\}.
\]
This identity is a finite Fourier series in \(e^{2i\theta}\).  Comparing its
Fourier coefficients gives the three equations. The coefficient of
\(e^{2i\theta}\) gives
\[
        d_A\phi_G=0,
\]
the coefficient of \(e^{4i\theta}\) gives
\[
        \{\phi_G,\phi_G\}=0,
\]
and the constant coefficient gives
\[
        F_A+\frac18\{\phi_G,\bar\phi_G\}=0.
\]
\end{proof}

\section{Classical groups, real forms, and the Pfaffian}
\label{sec:classical}

We now consider the classical groups and some real forms.  For the standard
representations of
\(
        GL(n,\mathbb C), \ Sp(2n,\mathbb C),\text{ and } SO(n,\mathbb C),
\)
we use
\(
        B(X,Y)=\operatorname{tr}(XY).
\)
Thus the quadratic Hamiltonian is given by \(\operatorname{tr}(\Phi^2)\). For a general invariant bilinear form \(B\), the same formulae hold with
\(\operatorname{tr}(\Phi^2)\) replaced by \(B(\Phi,\Phi)\). We shall also use the Hitchin morphism. Let
\(
        p_1,\ldots,p_\ell
\)
be homogeneous generators of the invariant ring
\(
        \mathbb C[\mathfrak g]^G
\)
with degrees
\(
        d_1,\ldots,d_\ell.
\)
The Hitchin morphism is
\[
        h_G:\mathcal M_{\mathrm{Dol}}(G,C)
        \longrightarrow
        \mathcal A_G
        :=
        \bigoplus_{j=1}^{\ell}H^0(C,K_C^{d_j}),
\]
defined by
\[
        h_G(P,\Phi)
        =
        \bigl(p_1(\Phi),\ldots,p_\ell(\Phi)\bigr).
\]
The vector space \(\mathcal A_G\) is called the Hitchin base (see
\cite{Hitchin1987,DonagiPantev} for more details).

\subsection{Complex symplectic and orthogonal groups}

\subsubsection{\(G=Sp(2n,\mathbb C)\)} An \(Sp(2n,\mathbb C)\)-Higgs bundle is a triple
\(
        (E,\omega,\Phi),
\)
where \(E\) is a holomorphic vector bundle of rank \(2n\),
\[
        \omega:E\otimes E\longrightarrow \mathcal O_C
\]
is a nondegenerate skew-symmetric form, and
\(
        \Phi\in H^0(C,\operatorname{End}(E)\otimes K_C)
\)
satisfies
\[
        \omega(\Phi u,v)+\omega(u,\Phi v)=0
\]
for local sections \(u,v\) of \(E\). Equivalently,
\(
        \Phi\in H^0(C,\mathfrak{sp}(E,\omega)\otimes K_C).
\)
The quadratic one-form is
\[
        \phi_{Sp}(Y)
        =
        -\frac12\int_C\operatorname{tr}(\Phi^2)\mu_Y,
\]
and the Hitchin base is
\[
        \mathcal A_{Sp(2n)}
        =
        \bigoplus_{j=1}^{n}H^0(C,K_C^{2j}).
\]

\subsubsection{\(G=SO(n,\mathbb C)\), \(n\geq3\)} An \(SO(n,\mathbb C)\)-Higgs bundle is a triple
\(
        (E,q,\Phi),
\)
where \(E\) is a holomorphic vector bundle of rank \(n\),
\(
        q:E\otimes E\longrightarrow\mathcal O_C
\)
is a nondegenerate symmetric form with fixed orientation, and
\(
        \Phi\in H^0(C,\operatorname{End}(E)\otimes K_C)
\)
satisfies
\(
        q(\Phi u,v)+q(u,\Phi v)=0
\)
for local sections \(u,v\) of \(E\). Equivalently,
\(
        \Phi\in H^0(C,\mathfrak{so}(E,q)\otimes K_C).
\)
The quadratic one-form is
\[
        \phi_{SO}(Y)
        =
        -\frac12\int_C\operatorname{tr}(\Phi^2)\mu_Y.
\]
For \(SO(2m+1,\mathbb C)\),
\[
        \mathcal A_{SO(2m+1)}
        =
        \bigoplus_{j=1}^{m}H^0(C,K_C^{2j}),
\]
while for \(SO(2m,\mathbb C)\),
\[
        \mathcal A_{SO(2m)}
        =
        \bigoplus_{j=1}^{m-1}H^0(C,K_C^{2j})
        \oplus H^0(C,K_C^m),
\]
the last summand being the Pfaffian invariant.

In both symplectic and orthogonal cases the circle action preserves the defining symplectic or orthogonal
condition on \(\Phi\). Hence the averaged connection and the identities of
Theorem~\ref{thm:compact-flatness} restrict to every smooth stable
\(Sp(2n,\mathbb C)\) or \(SO(n,\mathbb C)\)-component. For the classical
Hitchin bases and invariant degrees, see \cite{Hitchin1987}.

\subsection{Real forms}

Let \(G_{\mathbb R}\) be a real reductive group, let
\(
        H\subset G_{\mathbb R}
\)
be a maximal compact subgroup, and let
\[
        \mathfrak g_{\mathbb R}
        =
        \mathfrak h\oplus\mathfrak m
\]
be the Cartan decomposition.  
\begin{definition}
A \(G_{\mathbb R}\)-Higgs bundle is a pair
\(
        (P_H,\Phi),
\)
where \(P_H\) is a holomorphic principal \(H^{\mathbb C}\)-bundle and
\(
        \Phi\in H^0(C,P_H(\mathfrak m^{\mathbb C})\otimes K_C),
\)
where
\[
        P_H(\mathfrak m^{\mathbb C})
        =
        P_H\times^{H^{\mathbb C}}\mathfrak m^{\mathbb C}
\]
is the associated vector bundle.
\end{definition}

The circle action preserves the real-form Higgs locus, since
\[
        e^{i\theta}\Phi
        \in H^0(C,P_H(\mathfrak m^{\mathbb C})\otimes K_C)
\]
whenever
\[
        \Phi\in H^0(C,P_H(\mathfrak m^{\mathbb C})\otimes K_C).
\]
Therefore the circle-translated product connections are tangent to every smooth
real-form component, and so is their average. Hence the connection
\[
        \nabla_A=\nabla_B-\frac12\gamma
\]
and the identities of Theorem~\ref{thm:compact-flatness} restrict to these
components. We now write the quadratic one-form in the standard block
descriptions of the real-form Higgs fields.

\subsubsection{\texorpdfstring{\(SL(2,\mathbb R)\)}{SL(2,R)}}

For the uniformizing component of the \(SL(2,\mathbb R)\)-Higgs moduli space,
choose a square root \(K_C^{1/2}\). The corresponding Higgs bundle is
\[
        E=K_C^{1/2}\oplus K_C^{-1/2},
        \qquad
        \Phi=
        \begin{pmatrix}
        0&q\\
        1&0
        \end{pmatrix},
        \qquad
        q\in H^0(C,K_C^2).
\]
Here the lower-left entry is the natural section of
\[
        \operatorname{Hom}(K_C^{1/2},K_C^{-1/2})\otimes K_C
        \cong
        \mathcal O_C.
\]
A direct calculation gives
\[
        \Phi^2=
        \begin{pmatrix}
        q&0\\
        0&q
        \end{pmatrix},
        \qquad
        \operatorname{tr}(\Phi^2)=2q.
\]
Therefore
\[
        \phi_{SL(2,\mathbb R)}(Y)
        =
        -\int_C q\,\mu_Y.
\]
Under
\(
        T_C^*\mathcal T\cong H^0(C,K_C^2),
\)
this is the tautological one-form on \(T^*\mathcal T\), up to the sign used in
our definition of \(\phi_G\).

\subsubsection{\texorpdfstring{\(Sp(2n,\mathbb R)\)}{Sp(2n,R)}}

An \(Sp(2n,\mathbb R)\)-Higgs bundle is given by
\(
        (V,\beta,\gamma),
\)
where \(V\) is a holomorphic vector bundle of rank \(n\),
\[
        \beta\in H^0(C,\operatorname{Sym}^2V\otimes K_C),
        \qquad
        \gamma\in H^0(C,\operatorname{Sym}^2V^*\otimes K_C).
\]
The Higgs field has the block form
\[
        \Phi=
        \begin{pmatrix}
        0&\beta\\
        \gamma&0
        \end{pmatrix}.
\]
Thus
\[
        \Phi^2=
        \begin{pmatrix}
        \beta\gamma&0\\
        0&\gamma\beta
        \end{pmatrix}.
\]
Therefore, we get
\[
        \operatorname{tr}(\Phi^2)
        =
        2\operatorname{tr}(\beta\gamma).
\]
Hence
\[
        \phi_{Sp(2n,\mathbb R)}(Y)
        =
        -\int_C\operatorname{tr}(\beta\gamma)\mu_Y.
\]
For \(n=1\), this agrees with the \(SL(2,\mathbb R)\) formula.  For
\(n\geq2\), the Hitchin base contains higher even differentials, but the
variation over Teichmuller space uses only the quadratic invariant. For the
moduli spaces of \(Sp(2n,\mathbb R)\)-Higgs bundles, see
García-Prada--Gothen--Mundet \cite{GPGM}.

\subsubsection{\texorpdfstring{\(SO(p,q)\)}{SO(p,q)}}

An \(SO(p,q)\)-Higgs bundle is a tuple
\(
        (V,W,q_V,q_W,\eta),
\)
where \(V\) and \(W\) are holomorphic orthogonal bundles of ranks \(p\) and
\(q\), respectively,
\[
        \det(V)\otimes\det(W)\cong\mathcal O_C,
\]
and
\[
        \eta\in H^0(C,\operatorname{Hom}(W,V)\otimes K_C).
\]
Using the orthogonal forms, \(\eta\) has a transpose
\[
        \eta^t:V\longrightarrow W\otimes K_C.
\]
Therefore, the Higgs field is given by
\[
        \Phi=
        \begin{pmatrix}
        0&\eta\\
        -\eta^t&0
        \end{pmatrix}.
\]
Then
\[
        \Phi^2=
        \begin{pmatrix}
        -\eta\eta^t&0\\
        0&-\eta^t\eta
        \end{pmatrix},
\]
and therefore
\[
        \operatorname{tr}(\Phi^2)
        =
        -2\operatorname{tr}(\eta\eta^t).
\]
Thus, for the trace form,
\[
        \phi_{SO(p,q)}(Y)
        =
        \int_C\operatorname{tr}(\eta\eta^t)\mu_Y.
\]
Equivalently, without choosing signs in the block notation, one may write the
same one-form invariantly as
\[
        \phi_{SO(p,q)}(Y)
        =
        -\frac12\int_C B(\Phi,\Phi)\mu_Y.
\]
For \(SO(p,q)\)-Higgs bundles and higher Teichmuller components, see
\cite{ABCO}.

\subsubsection{\texorpdfstring{\(SU(p,q)\)}{SU(p,q)}}

An \(SU(p,q)\)-Higgs bundle consists of holomorphic vector bundles \(V\) and
\(W\), of ranks \(p\) and \(q\), with
\[
        \det(V)\otimes\det(W)\cong\mathcal O_C,
\]
and Higgs field
\[
        \Phi=
        \begin{pmatrix}
        0&\beta\\
        \gamma&0
        \end{pmatrix},
\]
where
\[
        \beta\in H^0(C,\operatorname{Hom}(W,V)\otimes K_C),
        \qquad
        \gamma\in H^0(C,\operatorname{Hom}(V,W)\otimes K_C).
\]
Then
\[
        \Phi^2=
        \begin{pmatrix}
        \beta\gamma&0\\
        0&\gamma\beta
        \end{pmatrix},
        \qquad
        \operatorname{tr}(\Phi^2)
        =
        2\operatorname{tr}(\beta\gamma).
\]
Hence
\[
        \phi_{SU(p,q)}(Y)
        =
        -\int_C\operatorname{tr}(\beta\gamma)\mu_Y.
\]

\subsection{The Pfaffian Hamiltonian}

Let
\(
        G=SO(2m,\mathbb C),\ m\geq 2.
\)
The Hitchin base contains the Pfaffian summand
\(
        H^0(C,K_C^m),
\)
corresponding to the degree \(m\) invariant
\[
        \operatorname{Pf}(\Phi)\in H^0(C,K_C^m).
\]
For \(m>2\), this invariant is independent of the quadratic invariant.  For
\(m=2\), the exceptional isomorphism
\[
        \mathfrak{so}(4,\mathbb C)\cong
        \mathfrak{sl}_2(\mathbb C)\oplus\mathfrak{sl}_2(\mathbb C)
\]
places the Pfaffian in degree two.

We use Hitchin's hyperk\"ahler notation.  The complex structure \(I\) is the
Dolbeault complex structure, \(J\) is the de Rham complex structure, and
\(K=IJ\).  The corresponding K\"ahler forms are denoted by
\[
        \omega_1,\qquad \omega_2,\qquad \omega_3,
\]
and
\[
        \omega^c=\omega_2+i\omega_3
\]
is the holomorphic symplectic form for the Dolbeault complex structure.

Let \(h\) be a fibrewise holomorphic function on the Higgs moduli space, and
let \(Z_h\) be its Hamiltonian vector field with respect to
\(
        \omega^c
        \ \text{and}\
        \iota_{Z_h}\omega^c=dh.
\)
The hyperk\"ahler identity
\[
        Jdh=2\,\iota_{Z_h}\omega_1
\]
gives a closed two-form
\(
        dJdh
\)
(see \cite[Sec.~2.3]{HitchinUniversal}). It is of type \((1,1)\) for the complex structures in the hyperk\"ahler family. 

Let
\(
        \nu\in H^1(C,K_C^{1-m}).
\)
Since
\(
        \operatorname{Pf}(\Phi)\in H^0(C,K_C^m),
\)
Serre duality gives a function on the smooth \(SO(2m,\mathbb C)\)-Higgs moduli
space
\[
        h_{\operatorname{Pf},\nu}
        =
        \left\langle
        \operatorname{Pf}(\Phi),\nu
        \right\rangle .
\]
The brackets denote the Serre-duality pairing
\[
        H^0(C,K_C^m)\times H^1(C,K_C^{1-m})
        \longrightarrow \mathbb C .
\]

\begin{proposition}
\label{prop:pfaffian-compact}
The function \(h_{\operatorname{Pf},\nu}\) is fibrewise holomorphic.  The
associated two-form
\(
        dJdh_{\operatorname{Pf},\nu}
\)
is closed and of type \((1,1)\) on the smooth \(SO(2m,\mathbb C)\)-locus.
\end{proposition}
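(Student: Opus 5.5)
The plan is to follow Hitchin's argument in \cite[Sec.~2.3]{HitchinUniversal}, replacing \(\operatorname{tr}(\Phi^2)\) by the Pfaffian. There are three steps: fibrewise holomorphy, closedness, and the type statement.

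\emph{Fibrewise holomorphy.} The Pfaffian \(\operatorname{Pf}:\mathfrak{so}(2m,\mathbb C)\to\mathbb C\) is an \(SO(2m,\mathbb C)\)-invariant homogeneous polynomial of degree \(m\). Hence \((E,q,\Phi)\mapsto\operatorname{Pf}(\Phi)\in H^0(C,K_C^m)\) is the Pfaffian component of the Hitchin morphism \(h_G\), and \(h_G\) is algebraic, hence holomorphic for the Dolbeault complex structure \(I\). Serre duality with the fixed class \(\nu\in H^1(C,K_C^{1-m})\) is a fixed linear functional on \(H^0(C,K_C^m)\). So \(h_{\operatorname{Pf},\nu}\) is the composition of a holomorphic map with a linear form, and it is \(I\)-holomorphic on the smooth locus. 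Concretely, if \(\nu\) is represented by a Dolbeault form \(\nu\in\Omega^{0,1}(C,K_C^{1-m})\), then \(h_{\operatorname{Pf},\nu}=\int_C\operatorname{Pf}(\Phi)\nu\). Its differential at a tangent vector \((\dot A^{0,1},\dot\Phi)\) is \(\int_C d\operatorname{Pf}_\Phi(\dot\Phi)\,\nu\). This depends only on \(\dot\Phi\) and is \(\mathbb C\)-linear in it, so \(dh\) is of type \((1,0)\) for \(I\).

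\emph{Closedness.} This is immediate: \(dJdh\) is exact, so \(d(dJdh)=d^2(Jdh)=0\). I would also record the identity \(Jdh=2\iota_{Z_h}\omega_1\) quoted above. It writes the two-form as \(2\,d\iota_{Z_h}\omega_1=2\mathcal L_{Z_h}\omega_1\), since \(\omega_1\) is closed, and this form is used in the type argument below.

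\emph{Type \((1,1)\).} I expect this to be the main step. The approach is the general hyperkähler lemma in \cite[Sec.~2.3]{HitchinUniversal}: for any \(I\)-holomorphic function \(h\), the form \(dJdh\) is of type \((1,1)\) with respect to \(I\), \(J\) and \(K\). Nothing beyond holomorphy of \(h\) is used, so it applies verbatim to \(h_{\operatorname{Pf},\nu}\).

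I would first check the \(I\)-type. Write \(h=u+iv\). Holomorphy gives \(I\,du=-dv\) (with the sign fixed by the convention \(Jdh=2\iota_{Z_h}\omega_1\)). Since \(J\) anticommutes with \(I\), \(Jdh\) is the real one-form \(2\iota_{Z_h}\omega_1\) attached to the Hamiltonian vector field of \(h\). The vector field \(Z_h\) of an \(I\)-holomorphic function for the \(I\)-holomorphic form \(\omega^c\) is \(I\)-holomorphic, so \(\mathcal L_{Z_h}I=0\). Because \(\omega_1\) is the \(I\)-Kähler form, \(\mathcal L_{Z_h}\omega_1\) is then of \(I\)-type \((1,1)\).

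For the \(J\)- and \(K\)-types I would use \(\omega^c=\omega_2+i\omega_3\) and the relation \(\iota_{Z_h}\omega^c=dh\). Taking \(d\) of the real and imaginary parts gives \(\mathcal L_{Z_h}\omega_2=\mathcal L_{Z_h}\omega_3=0\). Combining this with \(\mathcal L_{Z_h}I=0\) and the relations \(\omega_2=g(J\cdot,\cdot)\), \(\omega_3=g(K\cdot,\cdot)\), one expresses \(\mathcal L_{Z_h}\omega_1\) in the form \(g((\mathcal L_{Z_h}g)\cdot,\cdot)\) composed with \(I\). The obstacle is this final bookkeeping: verifying that \(\mathcal L_{Z_h}\omega_1\) commutes with \(J\) and \(K\). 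I would defer that to Hitchin's lemma rather than redo the computation. Since the lemma needs only the \(I\)-holomorphy of \(h\) and the hyperkähler structure on the smooth \(SO(2m,\mathbb C)\)-locus, this completes the proposition.
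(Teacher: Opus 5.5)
Your proposal is correct and follows the paper's route: holomorphy of $h_{\operatorname{Pf},\nu}$ from the invariance of the Pfaffian composed with the Serre pairing; closedness, which as you note is just exactness of $dJdh$; and the type $(1,1)$ statement obtained, exactly as in the paper, by appealing to Hitchin's hyperk\"ahler lemma through $Jdh=2\,\iota_{Z_h}\omega_1$. Two slips in your side computations. For complex $h$ the form $Jdh=2\,\iota_{Z_h}\omega_1$ is of $I$-type $(0,1)$, not real. Also, $\mathcal L_{Z_h}\omega_2=\mathcal L_{Z_h}\omega_3=0$ does not come from splitting into real and imaginary parts; it follows from $\iota_{Z_h}(\omega_2-i\omega_3)=0$, since $Z_h$ is of type $(1,0)$, which gives $\iota_{Z_h}\omega_2=\tfrac12\,dh$.
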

\begin{proof}
The Pfaffian is a polynomial invariant of \(\mathfrak{so}(2m,\mathbb C)\). Hence \(\operatorname{Pf}(\Phi)\), and therefore
\(h_{\operatorname{Pf},\nu}\), depends holomorphically on the Higgs field along
each Dolbeault fibre. Let \(Z_{h_{\operatorname{Pf},\nu}}\) be the Hamiltonian vector field with
respect to
\[
        \omega^c=\omega_2+i\omega_3,
        \qquad
        \iota_{Z_{h_{\operatorname{Pf},\nu}}}\omega^c
        =
        dh_{\operatorname{Pf},\nu}.
\]
The hyperk\"ahler identity
\[
        Jdh_{\operatorname{Pf},\nu}
        =
        2\,\iota_{Z_{h_{\operatorname{Pf},\nu}}}\omega_1
\]
implies that
\(
        dJdh_{\operatorname{Pf},\nu}
\)
is closed and of type \((1,1)\).  If \(h_{\operatorname{Pf},\nu}\) is
complex-valued, the statement is understood by applying the same argument to
its real and imaginary parts.
\end{proof}

For \(m>2\), the Pfaffian Hamiltonian is different from ordinary Teichmuller
variation.  Its parameter lies in
\(
        H^1(C,K_C^{1-m}),
\)
whereas
\(
        T_C\mathcal T\cong H^1(C,K_C^{-1}).
\)
Thus the Pfaffian gives a higher Hitchin Hamiltonian.  When \(m=2\), the
exceptional isomorphism
\[
        \mathfrak{so}(4,\mathbb C)
        \cong
        \mathfrak{sl}_2(\mathbb C)\oplus\mathfrak{sl}_2(\mathbb C)
\]
puts the Pfaffian in degree two, and the two parameter spaces coincide.

\section{Pointed curves and parabolic Higgs bundles}
\label{sec:prelim}

Let
\(
        D=p_1+\cdots+p_r
\)
be a reduced divisor on \(C\), and put
\(
        C^\circ=C\setminus D.
\)
A logarithmic Higgs field on \((C,D)\) is allowed simple poles along \(D\). In a
coordinate \(z\) centred at \(p_i\), it has the form
\[
        \Phi=
        \left(
        \frac{N_i}{z}+A_i(z)
        \right)dz,
        \qquad
        N_i=\operatorname{Res}_{p_i}(\Phi).
\]
The nilpotency of \(N_i\) controls the leading pole terms of invariant
polynomials in \(\Phi\).

\subsection{Parabolic Higgs bundles}

\begin{definition}
A \textit{parabolic bundle} over \((C,D)\) is a holomorphic vector bundle \(E\)
on \(C\), together with, for every \(p_i\in D\), a decreasing filtration
\[
        E_{p_i}=F_{i,1}\supset F_{i,2}\supset\cdots
        \supset F_{i,\ell_i}\supset F_{i,\ell_i+1}=0
\]
and real numbers
\[
        0\leq \alpha_{i,1}<\alpha_{i,2}<\cdots<\alpha_{i,\ell_i}<1.
\]
The numbers \(\alpha_{i,j}\) are called the \textit{parabolic weights}.
\end{definition}

The multiplicity of the weight \(\alpha_{i,j}\) is
\[
        m_{i,j}
        =
        \dim(F_{i,j}/F_{i,j+1}).
\]
The parabolic degree of \(E\) is
\[
        \operatorname{pardeg}(E)
        =
        \deg(E)
        +
        \sum_{i=1}^{r}\sum_{j=1}^{\ell_i}
        \alpha_{i,j}m_{i,j},
\]
and the parabolic slope is
\[
        \operatorname{par}\mu(E)
        =
        \frac{\operatorname{pardeg}(E)}{\operatorname{rk}(E)}.
\]

If \(F\subset E\) is a holomorphic subbundle, then \(F\) has an induced
parabolic structure.  At \(p_i\), the filtration is obtained by intersecting
with the filtration of \(E_{p_i}\)
\[
        F_{p_i}\cap F_{i,1}\supset
        F_{p_i}\cap F_{i,2}\supset\cdots\supset
        F_{p_i}\cap F_{i,\ell_i}\supset0,
\]
after deleting repetitions. The weights are the corresponding weights of
\(E\). The parabolic degree and parabolic slope of \(F\) are computed with
this induced parabolic structure.

\begin{definition}
A logarithmic Higgs field on \(E\) is a section
\(
        \Phi\in H^0(C,\operatorname{End}(E)\otimes K_C(D)).
\)
A \textit{parabolic Higgs bundle} is a parabolic vector bundle \(E\) together with a
logarithmic Higgs field \(\Phi\) such that, for every \(p_i\in D\),
\[
        \operatorname{Res}_{p_i}(\Phi)(F_{i,j})
        \subset F_{i,j}
        \qquad
        \text{for all }j.
\]
It is called \textit{strongly parabolic} if
\[
        \operatorname{Res}_{p_i}(\Phi)(F_{i,j})
        \subset F_{i,j+1}
        \qquad
        \text{for all }i,j.
\]
\end{definition}

Thus, for a strongly parabolic Higgs field, the residue at \(p_i\) is nilpotent
with respect to the flag at \(p_i\). In particular, if the flag is full, then
the residue is represented by a strictly upper triangular matrix in a basis
adapted to the flag.

\begin{definition}
A parabolic subbundle \(F\subset E\) is called \(\Phi\)-invariant if
\(
        \Phi(F)\subset F\otimes K_C(D).
\)
A parabolic Higgs bundle \((E,\Phi)\) is called \textit{stable} (resp. \textit{semistable}), if for
every proper nonzero \(\Phi\)-invariant parabolic subbundle \(F\subset E\), we have
\[
        \operatorname{par}\mu(F)<\operatorname{par}\mu(E),
        \quad (\text{resp.} \leq)        
\]
\end{definition}

For fixed rank, degree, parabolic weights and multiplicities, the moduli space
of semistable parabolic Higgs bundles is a quasi-projective variety.  We shall
only use its smooth stable locus (see \cite{MehtaSeshadri, Yokogawa, BodenYokogawa, BiswasParabolic} for more details). For the relation between parabolic Higgs bundles and Teichmuller
spaces of punctured surfaces, see \cite{BiswasGastesiGovindarajan}.

\begin{remark}
For \(SL(2,\mathbb C)\), we have
\(
        \det E\simeq\mathcal O_C,
        \ \text{and} \
        \Phi\in H^0(C,\operatorname{End}_0(E)\otimes K_C(D)).
\)
If \((E,\Phi)\) is strongly parabolic and
\(
        N_i=\operatorname{Res}_{p_i}(\Phi),
\)
then
\(
        N_i\in\mathfrak{sl}_2(\mathbb C).
\)
Since \(N_i\) strictly lowers the parabolic flag, it is nilpotent, in rank
two this gives
\(
        N_i^2=0,
\)
or equivalently
\(
        \operatorname{tr}(N_i^2)=0.
\)
This observation will be used later for
\(\operatorname{tr}(\Phi^2)\).
\end{remark}

\subsection{Logarithmic principal Higgs bundles}

\begin{definition}
A \textit{logarithmic \(G\)-Higgs bundle} on \((C,D)\) is a pair
\(
        (P,\Phi),
\)
where \(P\) is a holomorphic principal \(G\)-bundle on \(C\), and
\(
        \Phi\in H^0(C,\operatorname{ad}(P)\otimes K_C(D)).
\)
\end{definition}

Let \(p_i\in D\). Choose a coordinate \(z\) centred at \(p_i\), and choose a
local trivialization of \(\operatorname{ad}(P)\). Then the Higgs field has the
local form
\[
        \Phi=
        \left(
        \frac{N_i}{z}+A_i(z)
        \right)dz,
\]
where \(A_i(z)\) is holomorphic. The element
\(
        N_i\in\mathfrak g
\)
is the residue of \(\Phi\) at \(p_i\), and we write
\(
        N_i=\operatorname{Res}_{p_i}(\Phi).
\)

\begin{definition}
We say that the logarithmic Higgs field \(\Phi\) has nilpotent residues if
\(
        \operatorname{Res}_{p_i}(\Phi)
\)
is nilpotent for every \(p_i\in D\).
\end{definition}

For a reductive Lie algebra
\(
        \mathfrak g
        =
        \mathfrak z(\mathfrak g)\oplus[\mathfrak g,\mathfrak g],
\)
where \(\mathfrak z(\mathfrak g)\) is the centre and
\([\mathfrak g,\mathfrak g]\) is semisimple, an element
\(
        N\in\mathfrak g
\)
is called nilpotent if its component in \(\mathfrak z(\mathfrak g)\) is zero
and its component in \([\mathfrak g,\mathfrak g]\) is nilpotent.

We fix a smooth stable family
\[
        \mathcal M^{\mathrm{nil}}_{\mathrm{Dol}}(G)
        \longrightarrow
        \mathcal T_{g,r},
\]
whose fibre over \((C,D)\) is a smooth stable locus in the logarithmic Dolbeault
moduli space of \(G\)-Higgs bundles with fixed topological type and nilpotent
residues.

\subsection{Pointed Teichmuller space}

Let \(\mathcal T_{g,r}\) denote the Teichmuller space of pointed Riemann
surfaces of type \((g,r)\).  At the point represented by \((C,D)\), one has
\[
        T_{(C,D)}\mathcal T_{g,r}
        \cong
        H^1(C,T_C(-D)).
\]
Since
\[
        (T_C(-D))^\vee=K_C(D),
\]
Serre duality gives
\[
        T^*_{(C,D)}\mathcal T_{g,r}
        \cong
        H^0(C,K_C^2(D)).
\]
Thus the cotangent space is the space of meromorphic quadratic differentials
with at most simple poles at the marked points.
This is the standard cotangent-space description of pointed Teichmuller space;
see, for example, \cite{Gardiner,Hubbard}.

We shall use two representatives of a tangent vector
\(
        Y\in H^1(C,T_C(-D)).
\)
The first is a Dolbeault representative
\(
        \nu\in A^{0,1}(C,T_C(-D)),
\)
and the second is a \v{C}ech--Dolbeault representative
\(
        (\mu,\xi_1,\ldots,\xi_r),
\)
where \(\mu\) is a Beltrami differential on \(C^\circ\), and \(\xi_i\) is a
local holomorphic vector field near \(p_i\).  The vector
\(
        \xi_i(p_i)\in T_{p_i}C
\)
is the tangent direction of the marked point \(p_i\).

The following lemma will be used in the proof of the logarithmic energy-variation
formula.

\begin{lemma}
\label{lem:vanishing-representative}
Every class
\(
        Y\in H^1(C,T_C(-D))
\)
admits a Dolbeault representative
\(
        \nu\in A^{0,1}(C,T_C(-D))
\)
which is identically zero on a neighbourhood of \(D\).
\end{lemma}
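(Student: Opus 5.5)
Starting from an arbitrary Dolbeault representative, we subtract a \(\bar\partial\)-exact correction that is supported near the marked points. Let \(\nu_0\in A^{0,1}(C,T_C(-D))\) be any smooth \((0,1)\)-form representing \(Y\). The Dolbeault isomorphism \(H^1(C,T_C(-D))\cong A^{0,1}(C,T_C(-D))/\bar\partial A^{0}(C,T_C(-D))\) allows this. It then suffices to find a smooth section \(s\in A^0(C,T_C(-D))\) with \(\bar\partial s=\nu_0\) on a neighbourhood of \(D\). Then \(\nu=\nu_0-\bar\partial s\) represents the same class and vanishes near \(D\).

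The construction is local at each point. Fix \(p_i\) and a coordinate disc \(U_i=\{|z|<2\epsilon\}\) centred at \(p_i\), with the discs pairwise disjoint. On \(U_i\) the line bundle \(T_C(-D)\) is trivialized by the holomorphic frame \(z\,\partial_z\). So we write \(\nu_0=f_i(z)\,d\bar z\otimes z\partial_z\) with \(f_i\) smooth on \(U_i\). By the local Dolbeault lemma (the \(\bar\partial\)-Poincaré lemma on a disc, e.g.\ via the Cauchy transform of \(f_i\) cut off in a slightly smaller disc), there is a smooth function \(g_i\) on \(\{|z|<\tfrac32\epsilon\}\) with \(\partial g_i/\partial\bar z=f_i\). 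Put \(s_i=g_i\,z\partial_z\). This is a smooth section of \(T_C(-D)\) over that disc, since \(z\partial_z\) is a smooth frame of \(T_C(-D)\) there. It satisfies \(\bar\partial s_i=\nu_0\), because the frame is holomorphic.

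Now choose a cutoff \(\chi_i\) which equals \(1\) on \(\{|z|<\epsilon/2\}\) and is supported in \(\{|z|<\epsilon\}\). Set \(s=\sum_i\chi_i s_i\), extended by zero; this is a global smooth section of \(T_C(-D)\). On \(\{|z|<\epsilon/2\}\) we have \(\bar\partial s=\bar\partial s_i=\nu_0\). Hence \(\nu=\nu_0-\bar\partial s\) vanishes identically near every \(p_i\) and is cohomologous to \(\nu_0\).

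The only point needing care is that the primitive \(s_i\) must be a section of \(T_C(-D)\), i.e.\ vanish at \(p_i\). We do not impose this after the fact. We ensure it by solving the \(\bar\partial\)-equation in the frame \(z\partial_z\) of \(T_C(-D)\) rather than in the frame \(\partial_z\) of \(T_C\). Apart from this, the argument is the standard fact that \(\bar\partial\) is locally surjective on smooth sections of any holomorphic line bundle. If one prefers, the same argument can be phrased through the \v{C}ech--Dolbeault representative \((\mu,\xi_1,\ldots,\xi_r)\). Gluing it with cutoffs gives \(\nu=\mu-\bar\partial(\sum_i\chi_i\xi_i)\) away from \(D\), which vanishes where \(\chi_i\equiv1\).
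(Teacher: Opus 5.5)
Your argument is correct and is essentially the paper's proof: solve \(\bar\partial\eta_i=\nu_0\) on a disc around each \(p_i\) as a section of \(T_C(-D)\), multiply by a cutoff, and subtract \(\sum_i\bar\partial(\chi_i\eta_i)\); working in the holomorphic frame \(z\partial_z\) just makes explicit what the paper leaves implicit when it invokes the \(\bar\partial\)-Poincar\'e lemma for \(T_C(-D)\). The closing \v{C}ech--Dolbeault aside is not needed, and you should drop it: with \(\xi_i\) holomorphic, as in the paper's convention, it gives \(\nu=\mu\) rather than \(0\) where \(\chi_i\equiv1\).
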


\begin{proof}
Let
\(
        \nu_0\in A^{0,1}(C,T_C(-D))
\)
be a \(\bar\partial\)-closed representative of \(Y\). Choose pairwise disjoint
coordinate discs \(U_i\) around the points \(p_i\). Since \(U_i\) is
contractible, the local \(\bar\partial\)-Poincare lemma for the line bundle
\(T_C(-D)\) gives
\(
        \eta_i\in A^{0,0}(U_i,T_C(-D))
\)
such that
\(
        \bar\partial\eta_i=\nu_0|_{U_i}.
\)
Choose a smooth function \(\chi_i\) on \(C\), supported in \(U_i\), such that
\[
        \chi_i=1
        \quad\text{on a smaller disc }U_i'\subset U_i.
\] Define
\[
        \nu
        =
        \nu_0-\sum_{i=1}^{r}\bar\partial(\chi_i\eta_i).
\]
Then \(\nu\) differs from \(\nu_0\) by a \(\bar\partial\)-exact term, and hence
represents the same class in \(H^1(C,T_C(-D))\).  On \(U_i'\), we have
\(\chi_i=1\), and therefore
\[
        \nu
        =
        \nu_0-\bar\partial\eta_i
        =
        0.
\]
Thus \(\nu\) vanishes on a neighbourhood of \(D\).
\end{proof}

\subsection{Residue and the local Serre pairing}

For a meromorphic one-form \(\alpha\) near \(p\), we write
\(
        \operatorname{Res}_{p}(\alpha)
\)
for the coefficient of \(dz/z\) in a local coordinate \(z\) centred at \(p\). Equivalently,
\[
        \operatorname{Res}_{p}(\alpha)
        =
        \frac{1}{2\pi i}
        \int_{|z|=\varepsilon}\alpha .
\]
Let
\(
        q\in H^0(C,K_C^2(D)),
\)
and let
\(
        Y\in H^1(C,T_C(-D))
\)
be represented by
\(
        (\mu,\xi_1,\ldots,\xi_r),
\)
where \(\mu\) is a Beltrami differential on \(C^\circ\), and each \(\xi_i\) is
a local holomorphic vector field near \(p_i\). Then the Serre pairing is
\begin{equation}
        \langle q,Y\rangle
        =
        \int_{C^\circ}q\,\mu
        +
        \sum_{i=1}^{r}
        \operatorname{Res}_{p_i}(q\,\xi_i).
        \label{eq:serre-local}
\end{equation}
This is the \v{C}ech--Dolbeault form of the pairing
\[
        H^0(C,K_C^2(D))\times H^1(C,T_C(-D))
        \longrightarrow
        \mathbb C.
\]
This is the usual \v{C}ech--Dolbeault expression for the Serre pairing; see,
for example, \cite[Ch.~2]{GriffithsHarris}.
The residue term depends only on the tangent vector
\(
        \xi_i(p_i)\in T_{p_i}C.
\)
Indeed, if \(\eta_i\) is a local holomorphic vector field with
\(
        \eta_i(p_i)=0,
\)
then \(q\,\eta_i\) is holomorphic at \(p_i\), since \(q\) has at most a simple
pole. Therefore
\[
        \operatorname{Res}_{p_i}(q\,\eta_i)=0.
\]

\section{Nilpotent residues and invariant polynomials}
\label{sec:invpoly}

We use the notion of nilpotent residue fixed in Section~\ref{sec:prelim}. The
following lemma is about the quadratic invariant associated to an invariant
symmetric bilinear form.

We shall use the following consequence of the Jacobson-Morozov
theorem: if \(N\) is a nilpotent element of a complex reductive Lie algebra in
the sense fixed above, then there exists
\(
        H\in[\mathfrak g,\mathfrak g]
\)
such that
\(
        [H,N]=2N.
\)
(see \cite[Ch.~3]{CollingwoodMcGovern}).

\begin{proposition}
\label{prop:nilpotent-isotropic}
Let
\(
        B:\mathfrak g\otimes\mathfrak g\longrightarrow\mathbb C
\)
be an invariant symmetric bilinear form. If \(N\in\mathfrak g\) is nilpotent,
then
\(
        B(N,N)=0.
\)
\end{proposition}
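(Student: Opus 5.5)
The plan is to combine the Jacobson--Morozov element with the invariance of \(B\). Let \(N\in\mathfrak g\) be nilpotent in the sense fixed in Section~\ref{sec:prelim}. By the consequence of the Jacobson--Morozov theorem recalled above, there is \(H\in[\mathfrak g,\mathfrak g]\) with \([H,N]=2N\); if \(N=0\) there is nothing to prove, so I assume \(N\neq 0\).

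The key identity is the infinitesimal form of \(\operatorname{Ad}\)-invariance. Differentiating \(B(\operatorname{Ad}_{g}X,\operatorname{Ad}_{g}Y)=B(X,Y)\) along \(g=\exp(tH)\) at \(t=0\) gives
\[
        B([H,X],Y)+B(X,[H,Y])=0
        \qquad\text{for all }X,Y\in\mathfrak g .
\]
If \(B\) is only assumed invariant under \(\mathfrak g\), this identity is exactly the hypothesis; since \(G\) is connected, the group and Lie algebra notions coincide. Applying it with \(X=Y=N\) gives
\[
        0=B([H,N],N)+B(N,[H,N])=2B(N,N)+2B(N,N)=4B(N,N),
\]
so \(B(N,N)=0\).

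The only point needing care is that the Jacobson--Morozov statement is formulated in the reductive setting: \(N\) has zero central component and lies in the semisimple part \([\mathfrak g,\mathfrak g]\), where Jacobson--Morozov applies directly, producing \(H\in[\mathfrak g,\mathfrak g]\subset\mathfrak g\). The bracket \([H,N]\) computed in \([\mathfrak g,\mathfrak g]\) agrees with that in \(\mathfrak g\), so the identity above applies with the full invariant form \(B\) on \(\mathfrak g\). There is no real obstacle; alternatively, one could note that \(X\mapsto B(X,X)\) is an invariant polynomial constant on the orbit \(\{\operatorname{Ad}_{\exp(tH)}N=e^{2t}N\}\), so \(B(N,N)=e^{4t}B(N,N)\) for all \(t\in\mathbb R\), which again forces \(B(N,N)=0\).
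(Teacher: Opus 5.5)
Your proof is correct and is essentially the paper's argument. The paper also takes the Jacobson--Morozov element \(H\) with \([H,N]=2N\), but writes invariance in the associative form \(2B(N,N)=B([H,N],N)=B(H,[N,N])=0\), whereas you use the equivalent derivation form \(B([H,N],N)+B(N,[H,N])=0\) to get \(4B(N,N)=0\); your alternative scaling argument is the one the paper uses for general invariant polynomials in Theorem~\ref{thm:invpoly}.
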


\begin{proof}
By the definition fixed in Section~\ref{sec:prelim}, the central component of
\(N\) is zero. Hence
\(
        N\in[\mathfrak g,\mathfrak g].
\)
Choose
\(
        H\in[\mathfrak g,\mathfrak g]
\)
such that
\(
        [H,N]=2N.
\)
Using the invariance of \(B\), we get
\[
        2B(N,N)
        =
        B([H,N],N)
        =
        B(H,[N,N])
        =
        0.
\]
Therefore
\(
        B(N,N)=0. \qedhere
\)
\end{proof}

We also need the corresponding vanishing for homogeneous invariant polynomials.

\begin{theorem}
\label{thm:invpoly}
Let
\(
        p\in\mathbb C[\mathfrak g]^G
\)
be a homogeneous invariant polynomial of degree \(d\geq 1\). Let
\((P,\Phi)\) be a logarithmic \(G\)-Higgs bundle on \((C,D)\). Assume that
every residue
\(
        N_i=\operatorname{Res}_{p_i}(\Phi)
\)
is nilpotent. Then
\[
        p(\Phi)\in H^0\bigl(C,K_C^d((d-1)D)\bigr).
\]
\end{theorem}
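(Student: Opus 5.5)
The plan is to reduce the global statement to a local computation at each marked point, and then use nilpotency of the residue to kill the leading coefficient. Away from \(D\), the field \(\Phi\) is a holomorphic section of \(\operatorname{ad}(P)\otimes K_C\). Since \(p\) is \(G\)-invariant, applying \(p\) fibrewise to \(\operatorname{ad}(P)\) is independent of trivialization. So \(p(\Phi)\) is a well-defined holomorphic section of \(K_C^d\) on \(C^\circ\), and a meromorphic section of \(K_C^d(dD)\) on \(C\), because \(\Phi\) has at most simple poles. It therefore suffices to show that at each \(p_i\) the pole order of \(p(\Phi)\) is at most \(d-1\).

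Fix \(p_i\), a coordinate \(z\) centred at \(p_i\), and a local trivialization of \(\operatorname{ad}(P)\), so that \(\Phi=(N_i/z+A_i(z))\,dz\). By homogeneity of degree \(d\),
\[
        p(\Phi)=p\Bigl(\frac{N_i+zA_i(z)}{z}\Bigr)dz^d
        =z^{-d}\,p\bigl(N_i+zA_i(z)\bigr)\,dz^d .
\]
The function \(F(z)=p(N_i+zA_i(z))\) is holomorphic near \(0\), being a polynomial in the holomorphic entries, and \(F(0)=p(N_i)\). Hence \(F(z)=p(N_i)+zF_1(z)\) with \(F_1\) holomorphic. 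So the claim follows once we know \(p(N_i)=0\); the remaining term is then \(z^{1-d}F_1(z)\,dz^d\), which has a pole of order at most \(d-1\). Changing the trivialization conjugates \(N_i\) by an element of \(G\), and this does not affect \(p(N_i)\), so the argument is well posed.

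The key step is the vanishing \(p(N)=0\) for nilpotent \(N\) and homogeneous invariant \(p\) of positive degree, generalizing Proposition~\ref{prop:nilpotent-isotropic}. Take \(H\in[\mathfrak g,\mathfrak g]\) with \([H,N]=2N\), as supplied by Jacobson--Morozov. Then
\[
        \operatorname{Ad}(\exp(tH))N=e^{2t}N
\]
for all \(t\in\mathbb C\), and \(\exp(tH)\in G\) since \(G\) is connected. Invariance and homogeneity give
\[
        p(N)=p(e^{2t}N)=e^{2dt}p(N).
\]
Choosing \(t\) with \(e^{2dt}\neq1\) forces \(p(N)=0\). Equivalently, \(0\) lies in the closure of the orbit of \(N\), and \(p\) is constant on orbit closures.

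I expect the only subtle point to be that one needs invariance under the group \(G\), not merely under \(\mathfrak g\). This is handled because \(\exp(tH)\) lies in the identity component, which is all of \(G\). The definition of nilpotent as having zero central component is what guarantees that \(H\) exists: a central component would be fixed by \(\operatorname{Ad}\) and would not scale. With \(p(N_i)=0\) at every marked point, the pole bound holds at each \(p_i\), and therefore \(p(\Phi)\in H^0\bigl(C,K_C^d((d-1)D)\bigr)\).
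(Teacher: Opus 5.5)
Your proposal is correct and follows the paper's proof essentially step for step. Both reduce to the local expansion \(p(\Phi)=z^{-d}p(N_i+zA_i(z))\,dz^d\) and kill the constant term \(p(N_i)\) using a Jacobson--Morozov element \(H\) with \([H,N_i]=2N_i\), via \(p(N_i)=e^{2ds}p(N_i)\).
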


\begin{proof}
Fix \(p_i\in D\).  Choose a coordinate \(z\) centred at \(p_i\), and choose a
local trivialization of \(\operatorname{ad}(P)\).  Then
\[
        \Phi=
        \left(
        \frac{N_i}{z}+A_i(z)
        \right)dz,
\]
where \(A_i(z)\) is holomorphic and
\(
        N_i=\operatorname{Res}_{p_i}(\Phi).
\)
Since \(p\) is homogeneous of degree \(d\),
\[
        p(\Phi)
        =
        z^{-d}p(N_i+zA_i(z))\,dz^d.
\]
We first show that
\(
        p(N_i)=0.
\)
By the nilpotency of \(N_i\) and the Jacobson-Morozov consequence, there exists
\(
        H_i\in[\mathfrak g,\mathfrak g]
\)
such that
\(
        [H_i,N_i]=2N_i.
\)
Since \(p\) is invariant under the adjoint action,
\[
        p(N_i)
        =
        p(e^{s\operatorname{ad}H_i}N_i)
        =
        p(e^{2s}N_i)
        =
        e^{2ds}p(N_i)
\]
for every \(s\in\mathbb C\).  Since \(d>0\), this forces
\(
        p(N_i)=0,
\)
which is the constant term of
\(
        p(N_i+zA_i(z)).
\)
Hence
\[
        p(N_i+zA_i(z))=O(z),
\]
and therefore
\[
        p(\Phi)=O(z^{-(d-1)})\,dz^d.
\]
Thus \(p(\Phi)\) has pole order at most \(d-1\) at \(p_i\).  Since this holds
for every \(p_i\in D\), we obtain
\[
        p(\Phi)\in H^0\bigl(C,K_C^d((d-1)D)\bigr).
\]
\end{proof}

We shall also use the following local refinement of the same argument.

\begin{lemma}
\label{lem:refined-pole}
Let \(p\in\mathbb C[\mathfrak g]^G\) be homogeneous of degree \(d\). Fix
\(p_i\in D\). Suppose that \(1\leq r_i\leq d\) and 
\[
        (d^j p)_{N_i}=0
        \qquad
        \text{for }0\leq j\leq r_i-1.
\]
Then \(p(\Phi)\) has pole order at most \(d-r_i\) at \(p_i\).
\end{lemma}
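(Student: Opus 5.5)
The plan is to reduce everything to a Taylor expansion of the polynomial \(p\) at the point \(N_i\), exactly as in the proof of Theorem~\ref{thm:invpoly}.

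Fix a coordinate \(z\) centred at \(p_i\) and a local trivialization of \(\operatorname{ad}(P)\), and write \(\Phi=(N_i/z+A_i(z))\,dz\) with \(A_i\) holomorphic. Homogeneity of degree \(d\) gives
\[
        p(\Phi)=z^{-d}\,p\bigl(N_i+zA_i(z)\bigr)\,dz^d .
\]
The whole statement then reduces to showing that the holomorphic function \(F(z)=p(N_i+zA_i(z))\) vanishes to order at least \(r_i\) at \(z=0\). Granting this, \(p(\Phi)=O(z^{r_i-d})\,dz^d\), which is the claimed pole order at most \(d-r_i\). When \(r_i=d\) this means \(p(\Phi)\) is holomorphic at \(p_i\).

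To get the vanishing order, I would use the finite Taylor expansion of the polynomial \(p\) at \(N_i\):
\[
        p(N_i+X)=\sum_{j=0}^{d}\frac{1}{j!}(d^jp)_{N_i}(X,\ldots,X),
\]
where \((d^jp)_{N_i}\) is the \(j\)-th differential, a symmetric \(j\)-linear form on \(\mathfrak g\). This expansion is exact and stops at \(j=d\) because \(p\) has degree \(d\). Substituting \(X=zA_i(z)\) and using \(j\)-linearity gives
\[
        F(z)=\sum_{j=0}^{d}\frac{z^j}{j!}(d^jp)_{N_i}\bigl(A_i(z),\ldots,A_i(z)\bigr).
\]
Each term is \(z^j\) times a holomorphic function, because \(A_i\) is holomorphic. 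By hypothesis the terms with \(j\le r_i-1\) vanish identically, so \(F(z)=z^{r_i}G(z)\) with \(G\) holomorphic near \(0\). This is the required order of vanishing.

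The one point needing care is that the conclusion does not depend on the chosen trivialization. Under a holomorphic change of trivialization \(g(z)\), the residue becomes \(\operatorname{Ad}_{g(0)}N_i\). By \(G\)-invariance of \(p\), \((d^jp)_{\operatorname{Ad}_gN}=(d^jp)_N\circ(\operatorname{Ad}_{g^{-1}})^{\otimes j}\), so the hypothesis holds in one trivialization iff it holds in any other. In any case \(p(\Phi)\) is intrinsically defined, so no genuine obstacle arises. The argument is a direct refinement of the proof of Theorem~\ref{thm:invpoly}, whose case \(r_i=1\) is exactly the vanishing \(p(N_i)=0\) supplied there by Jacobson--Morozov.
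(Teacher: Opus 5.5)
Your proposal is correct and follows the paper's proof: write \(p(\Phi)=z^{-d}p(N_i+zA_i(z))\,dz^d\), then use the vanishing of the first \(r_i\) Taylor terms of \(p\) at \(N_i\) to get \(p(N_i+zA_i(z))=O(z^{r_i})\). You additionally make explicit the exact finite Taylor expansion and the independence of the hypothesis from the choice of trivialization, both of which the paper leaves implicit.
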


\begin{proof}
With the notation of the above proof,
\[
        p(\Phi)
        =
        z^{-d}p(N_i+zA_i(z))\,dz^d.
\]
Then the vanishing of the first \(r_i\) Taylor terms gives
\[
        p(N_i+zA_i(z))=O(z^{r_i}).
\]
Hence
\[
        p(\Phi)=O(z^{-(d-r_i)})\,dz^d.
\]
\end{proof}

\begin{corollary}
\label{cor:nilpotent-hitchin-map}
Let
\(
        p_1,\ldots,p_\ell
\)
be homogeneous generators of \(\mathbb C[\mathfrak g]^G\), with degrees
\(
        d_1,\ldots,d_\ell.
\)
On the nilpotent-residue locus, the logarithmic Hitchin map takes values in
\[
        \bigoplus_{j=1}^{\ell}
        H^0\bigl(C,K_C^{d_j}((d_j-1)D)\bigr).
\]
Equivalently, the usual logarithmic Hitchin map factors as
\[
        h_{G,\log}^{\mathrm{nil}}:
        \mathcal M_{\mathrm{Dol}}^{\mathrm{nil}}(G)
        \longrightarrow
        \bigoplus_{j=1}^{\ell}
        H^0\bigl(C,K_C^{d_j}((d_j-1)D)\bigr).
\]
\end{corollary}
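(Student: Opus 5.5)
The plan is to apply Theorem~\ref{thm:invpoly} to each generator separately and then compare the resulting map with the usual logarithmic Hitchin map. First I recall the target of the usual logarithmic Hitchin map. For any logarithmic \(G\)-Higgs bundle \((P,\Phi)\), with \(\Phi\in H^0(C,\operatorname{ad}(P)\otimes K_C(D))\), each invariant polynomial \(p_j\) of degree \(d_j\) is \(\operatorname{Ad}\)-invariant. So \(p_j(\Phi)\) is a well-defined section of \((K_C(D))^{d_j}=K_C^{d_j}(d_jD)\). The usual map therefore lands in \(\bigoplus_j H^0(C,K_C^{d_j}(d_jD))\). Independence of the local trivialization follows from invariance, exactly as in the compact case.

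Next, fix a point \((P,\Phi)\) of \(\mathcal M^{\mathrm{nil}}_{\mathrm{Dol}}(G)\), so every residue \(N_i\) is nilpotent. Each generator \(p_j\) is homogeneous of degree \(d_j\geq1\); the degree-zero constants are excluded since we take generators of positive degree. Theorem~\ref{thm:invpoly} applied to \(p=p_j\) gives \(p_j(\Phi)\in H^0(C,K_C^{d_j}((d_j-1)D))\). This holds for each \(j\), so the tuple \((p_1(\Phi),\ldots,p_\ell(\Phi))\) lies in the smaller direct sum.

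For the factorization statement, note that \(H^0(C,K_C^{d_j}((d_j-1)D))\subset H^0(C,K_C^{d_j}(d_jD))\) is a closed linear subspace, given by the vanishing of the leading polar coefficient at each \(p_i\). A morphism whose image lies in a closed subspace factors through it, and on a reduced base this is a morphism into the subspace. The nilpotent-residue locus is, moreover, the locus where the leading coefficients \(p_j(N_i)\) all vanish, which is a closed condition. So \(h_{G,\log}^{\mathrm{nil}}\) is the corestriction of \(h_{G,\log}\).

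I expect no real obstacle here. The only points needing care are that the generators have positive degree, so that Theorem~\ref{thm:invpoly} applies, and, if one wants a scheme-theoretic factorization, that the source is reduced. This holds since we work on a smooth stable locus.
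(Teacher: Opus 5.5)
Your proposal is correct and follows the paper's proof. The paper's proof is the one-line application of Theorem~\ref{thm:invpoly} to each generator \(p_j\), and your extra remarks on corestricting through the closed linear subspace on a reduced source are sound. Your side claim, that the nilpotent-residue locus is exactly where all \(p_j(N_i)=0\), is also true but is not needed for the corollary. It relies on the converse fact that the nilpotent cone is the common zero set of the positive-degree invariants (Kostant).
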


\begin{proof}
This follows by applying Theorem~\ref{thm:invpoly} to each generator
\(p_j\).
\end{proof}

\begin{corollary}
\label{cor:quadratic}
Let
\(
        B:\mathfrak g\otimes\mathfrak g\longrightarrow\mathbb C
\)
be an invariant symmetric bilinear form.  Let \((P,\Phi)\) be a logarithmic
\(G\)-Higgs bundle on \((C,D)\) with nilpotent residues. Then
\[
        B(\Phi,\Phi)\in H^0(C,K_C^2(D)).
\]
\end{corollary}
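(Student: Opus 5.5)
The plan is to specialize Theorem~\ref{thm:invpoly} to the quadratic invariant \(p(X)=B(X,X)\), with a direct local check as a fallback. First I verify that \(p\) qualifies. It is a homogeneous polynomial of degree \(d=2\) on \(\mathfrak g\). It is \(G\)-invariant: since \(B\) is invariant, \(B(\operatorname{Ad}_gX,\operatorname{Ad}_gX)=B(X,X)\), which follows from infinitesimal invariance because \(G\) is connected. So \(p\in\mathbb C[\mathfrak g]^G\), and Theorem~\ref{thm:invpoly} applies with \(d=2\). It gives \(B(\Phi,\Phi)\in H^0(C,K_C^2((2-1)D))=H^0(C,K_C^2(D))\).

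I would also give a direct argument using only Proposition~\ref{prop:nilpotent-isotropic}, since that makes the mechanism transparent. On \(C^\circ\), \(B(\Phi,\Phi)\) is a holomorphic section of \(K_C^2\). Near \(p_i\), in the local form \(\Phi=(N_i/z+A_i(z))\,dz\), bilinearity and symmetry of \(B\) give
\[
B(\Phi,\Phi)=\Bigl(z^{-2}B(N_i,N_i)+2z^{-1}B(N_i,A_i(z))+B(A_i(z),A_i(z))\Bigr)dz^2.
\]
This expression does not depend on the chosen trivialization of \(\operatorname{ad}(P)\), because \(B\) is \(\operatorname{Ad}\)-invariant. By Proposition~\ref{prop:nilpotent-isotropic}, \(B(N_i,N_i)=0\), since \(N_i\) is nilpotent. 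The remaining terms have at most a simple pole at \(p_i\). Hence \(B(\Phi,\Phi)\) is a meromorphic section of \(K_C^2\) with at most simple poles along \(D\), that is, a global section of \(K_C^2(D)\).

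No step here is a real obstacle. The only point needing care is that the global object \(B(\Phi,\Phi)\) is well defined independently of local trivializations, and this again comes from \(\operatorname{Ad}\)-invariance. The nilpotent-isotropy input is already supplied by Proposition~\ref{prop:nilpotent-isotropic}, or equivalently by the Jacobson--Morozov step inside the proof of Theorem~\ref{thm:invpoly}.
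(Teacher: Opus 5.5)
Your proposal is correct and matches the paper's proof, which likewise applies Theorem~\ref{thm:invpoly} with $d=2$ to the quadratic invariant; the paper uses $p(X)=\frac12 B(X,X)$, and the extra scalar makes no difference. Your supplementary local expansion via Proposition~\ref{prop:nilpotent-isotropic} is also valid, and it spells out the same pole-cancellation mechanism directly for $B(\Phi,\Phi)$.
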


\begin{proof}
The polynomial
\[
        p(X)=\frac12 B(X,X)
\]
is invariant and homogeneous of degree two.  The result follows from
Theorem~\ref{thm:invpoly}.
\end{proof}

\subsection{Sharpness for trace invariants}

For \(G=GL(n,\mathbb C)\), Theorem~\ref{thm:invpoly} applied to
\(
        p(X)=\operatorname{tr}(X^k)
\)
gives
\[
        \operatorname{tr}(\Phi^k)
        \in
        H^0\bigl(C,K_C^k((k-1)D)\bigr)
\]
whenever the residues of \(\Phi\) are nilpotent.  The following local
calculation shows that the pole order \(k-1\) cannot be improved in general.

\begin{proposition}
\label{prop:trace-sharp}
Let \(G=GL(n,\mathbb C)\), and let \(2\leq k\leq n\).  There are local logarithmic Higgs fields with nilpotent residue for which
\(
        \operatorname{tr}(\Phi^k)
\)
has a pole of order exactly \(k-1\).
\end{proposition}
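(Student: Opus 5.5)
The plan is to exhibit an explicit local Higgs field and read off the leading coefficient. By Theorem~\ref{thm:invpoly} we already know the pole order is at most \(k-1\). So it suffices to produce \(N\) nilpotent and \(A(z)\) holomorphic such that the coefficient of \(z^{1}\) in \(\operatorname{tr}\bigl((N+zA(z))^k\bigr)\) is nonzero. I will take the simplest possible data: a constant holomorphic part \(A(z)\equiv A\) on a coordinate disc centred at \(p_i\), and \(\Phi=(N/z+A)\,dz\).

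First I expand the trace to first order in \(z\). By multilinearity,
\[
\operatorname{tr}\bigl((N+zA)^k\bigr)=\operatorname{tr}(N^k)+z\sum_{j=0}^{k-1}\operatorname{tr}(N^{j}AN^{k-1-j})+O(z^2).
\]
By cyclicity of the trace, the linear coefficient equals \(k\,\operatorname{tr}(N^{k-1}A)\). Since \(N\) is nilpotent, \(\operatorname{tr}(N^k)=0\), which is the vanishing already used in Theorem~\ref{thm:invpoly}. Hence
\[
\operatorname{tr}(\Phi^k)=z^{-k}\bigl(k\,\operatorname{tr}(N^{k-1}A)\,z+O(z^2)\bigr)\,dz^k,
\]
and the pole order is exactly \(k-1\) as soon as \(\operatorname{tr}(N^{k-1}A)\neq0\).

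Next I choose the matrices. Let \(N=J\) be the regular nilpotent Jordan block, \(Je_{j+1}=e_j\), so \(J^{k-1}\) has entries \(1\) in positions \((j,j+k-1)\) and is nonzero precisely because \(k\leq n\). Let \(A=E_{k,1}\) be the elementary matrix with a single \(1\) in position \((k,1)\). Then
\[
\operatorname{tr}(J^{k-1}E_{k,1})=(J^{k-1})_{1,k}=1,
\]
so the coefficient is \(k\neq0\). The residue \(N=J\) is nilpotent in the sense fixed in Section~\ref{sec:prelim}: its central (scalar) component has trace zero, hence is zero.

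The only point needing care is the hypothesis \(k\leq n\), which guarantees that \(J^{k-1}\neq0\) and that the entry \((1,k)\) exists. The rest is routine bookkeeping, and there is no serious obstacle. If a nonlocal example is wanted, one could try to realize this local model globally (e.g. on a trivial bundle with a suitable meromorphic section). However, the statement only asks for local logarithmic Higgs fields, so I would stop at the local construction.
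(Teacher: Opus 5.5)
Your proposal is correct and is essentially the paper's proof: the same first-order expansion, the same reduction of the coefficient to $k\,\operatorname{tr}(N^{k-1}A(0))$ by cyclicity, and the same choice $A(0)=E_{k1}$. The only difference is that you take the full $n\times n$ regular nilpotent block for $N$, while the paper uses the size-$k$ Jordan block $E_{12}+\cdots+E_{k-1,k}$ inside $\mathfrak{gl}_k\subset\mathfrak{gl}_n$; both give $(N^{k-1})_{1k}=1$, so the coefficient is $k\neq0$ in either case.
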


\begin{proof}
It is enough to give a local logarithmic Higgs field with nilpotent residue for
which the coefficient of the pole of order \(k-1\) is nonzero.

Let
\[
        N=E_{12}+E_{23}+\cdots+E_{k-1,k}
        \in\mathfrak{gl}_k\subset\mathfrak{gl}_n
\]
be the nilpotent Jordan block of size \(k\). Choose a holomorphic matrix
\(A(z)\) with
\(
        A(0)=E_{k1}.
\)
Consider locally
\[
        \Phi=
        \left(
        \frac{N}{z}+A(z)
        \right)dz .
\]
The coefficient of \(z^{-(k-1)}dz^k\) in \(\operatorname{tr}(\Phi^k)\) is
obtained by taking exactly one factor \(A(0)\) and \(k-1\) factors \(N\). Hence
it is
\[
        \sum_{j=0}^{k-1}
        \operatorname{tr}
        \bigl(
        N^jA(0)N^{k-1-j}
        \bigr).
\]
By cyclicity of the trace, this equals
\[
        k\,\operatorname{tr}\bigl(N^{k-1}A(0)\bigr).
\]
Since
\[
        N^{k-1}=E_{1k},
        \qquad
        A(0)=E_{k1},
\]
we get
\[
        \operatorname{tr}\bigl(N^{k-1}A(0)\bigr)
        =
        \operatorname{tr}(E_{11})
        =
        1.
\]
Thus the coefficient is \(k\neq0\).  Therefore a pole of order \(k-1\) can occur.
\end{proof}

\begin{remark}
For \(k=2\), the nilpotent residue removes the double pole of
\(\operatorname{tr}(\Phi^2)\), but the simple pole may remain.  This is exactly
the local behaviour used later for strongly parabolic logarithmic
\(SL(2,\mathbb C)\)-Higgs fields.
\end{remark}

\subsection{The Pfaffian invariant}

Let
\(
        G=SO(2m,\mathbb C).
\)
The Pfaffian is the degree \(m\) invariant polynomial
\[
        \operatorname{Pf}:\mathfrak{so}(2m,\mathbb C)\longrightarrow\mathbb C.
\]
For a logarithmic \(SO(2m,\mathbb C)\)-Higgs field, the invariant
\(
        \operatorname{Pf}(\Phi)
\)
is therefore a meromorphic \(m\)-differential.  Theorem~\ref{thm:invpoly}
gives the pole estimate below. Under an additional rank condition on the
residues, the first Taylor term of the Pfaffian also vanishes, and the pole
order drops by one more.

\begin{corollary}
\label{cor:pfaffian-log}
Let \((P,\Phi)\) be a logarithmic \(SO(2m,\mathbb C)\)-Higgs bundle with
nilpotent residues. Then
\[
        \operatorname{Pf}(\Phi)
        \in
        H^0\bigl(C,K_C^m((m-1)D)\bigr).
\]
If, for every \(p_i\in D\),
\[
        \operatorname{rk}\operatorname{Res}_{p_i}(\Phi)\leq 2m-4,
\]
then
\[
        \operatorname{Pf}(\Phi)
        \in
        H^0\bigl(C,K_C^m((m-2)D)\bigr).
\]
\end{corollary}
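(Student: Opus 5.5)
The first assertion follows directly from Theorem~\ref{thm:invpoly}. The Pfaffian is a homogeneous \(SO(2m,\mathbb C)\)-invariant polynomial of degree \(m\geq 1\), and all residues are nilpotent, so \(\operatorname{Pf}(\Phi)\in H^0(C,K_C^m((m-1)D))\) with no further work.

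For the refined statement I will apply Lemma~\ref{lem:refined-pole} with \(d=m\) and \(r_i=2\) at each \(p_i\). This requires
\[
        \operatorname{Pf}(N_i)=0
        \qquad\text{and}\qquad
        (d\operatorname{Pf})_{N_i}=0 .
\]
The first condition is already contained in Theorem~\ref{thm:invpoly}; it also follows from \(\operatorname{Pf}(N)^2=\det N=0\). The real task is the first derivative. Identify \(\mathfrak{so}(2m,\mathbb C)\) with skew-symmetric matrices, or equivalently with \(\Lambda^2 V\) via the orthogonal form \(q\) on \(V=\mathbb C^{2m}\), writing \(X\mapsto \omega_X\). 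Then
\[
        \operatorname{Pf}(X)\,\mathrm{vol}=\frac{1}{m!}\,\omega_X^{\wedge m},
\]
with \(\mathrm{vol}\) the fixed volume form in \(\Lambda^{2m}V\). The directional derivative in a direction \(A\) is
\[
        (d\operatorname{Pf})_N(A)\,\mathrm{vol}
        =
        \frac{1}{(m-1)!}\,\omega_A\wedge\omega_N^{\wedge(m-1)} .
\]
It therefore suffices to show \(\omega_N^{\wedge(m-1)}=0\).

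The key step is the rank observation. A 2-form \(\omega\) whose associated skew matrix has rank \(2s\) can be written as \(e_1\wedge e_2+\cdots+e_{2s-1}\wedge e_{2s}\), so \(\omega^{\wedge k}=0\) for \(k>s\). Since \(N_i\) is skew with respect to \(q\), its matrix in an orthonormal basis is skew-symmetric, and the rank of \(\omega_{N_i}\) equals \(\operatorname{rk}N_i\). This rank is even and, by hypothesis, at most \(2m-4\), so \(s\leq m-2<m-1\). Hence \(\omega_{N_i}^{\wedge(m-1)}=0\), and the whole differential \((d\operatorname{Pf})_{N_i}\) vanishes.

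Lemma~\ref{lem:refined-pole} with \(r_i=2\) then gives pole order at most \(m-2\) at each \(p_i\), that is, \(\operatorname{Pf}(\Phi)\in H^0(C,K_C^m((m-2)D))\). The same argument also shows \(\operatorname{Pf}(N_i)=0\) independently, since \(\omega^{\wedge m}=0\) once \(s<m\).

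The only delicate point is bookkeeping in the identification \(\mathfrak{so}(2m)\cong\Lambda^2V\). I must make sure that the rank of \(N_i\) as an endomorphism equals the rank of the associated bivector; this holds because \(q\) is nondegenerate. Nilpotency is not needed for this second step; the rank bound alone suffices. When \(m=2\) the hypothesis forces \(N_i=0\), the residue vanishes, and the conclusion is just holomorphy, which is consistent with the statement.
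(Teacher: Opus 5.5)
Your proof is correct and follows the paper's route. You get the first bound from Theorem~\ref{thm:invpoly}, then apply Lemma~\ref{lem:refined-pole} with \(r_i=2\), which reduces the second bound to \((d\operatorname{Pf})_{N_i}=0\), forced by the rank condition. Your identity \((d\operatorname{Pf})_N(A)\,\mathrm{vol}=\frac{1}{(m-1)!}\,\omega_A\wedge\omega_N^{\wedge(m-1)}\) is the coordinate-free form of the paper's observation: the partial derivatives of \(\operatorname{Pf}\) are the \((2m-2)\times(2m-2)\) sub-Pfaffians, and these vanish because the corresponding principal submatrices have rank at most \(2m-4\). You also state explicitly the \(j=0\) condition \(\operatorname{Pf}(N_i)=0\), which the paper leaves implicit.
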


\begin{proof}
The first statement follows from Theorem~\ref{thm:invpoly}, since
\(\operatorname{Pf}\) is homogeneous of degree \(m\). For the second assertion, put
\(
        N_i=\operatorname{Res}_{p_i}(\Phi).
\)
It is enough, by Lemma~\ref{lem:refined-pole}, to prove
\[
        d(\operatorname{Pf})_{N_i}=0.
\]
Choose a local orthogonal trivialization near \(p_i\), so that
\(N_i\) is represented by a skew-symmetric \(2m\times 2m\) matrix. For a skew-symmetric \(2m\times 2m\) matrix \(X\), the first partial
derivatives of \(\operatorname{Pf}(X)\) are, up to sign, Pfaffians of the
\((2m-2)\times(2m-2)\) skew-symmetric matrices obtained by deleting two
corresponding rows and columns. If
\[
        \operatorname{rk}(N_i)\leq 2m-4,
\]
then each such \((2m-2)\times(2m-2)\) matrix is singular. Its determinant is
zero, and hence its Pfaffian is zero. Therefore
\[
        d(\operatorname{Pf})_{N_i}=0.
\]
Then by Lemma~\ref{lem:refined-pole}, with \(p=\operatorname{Pf}\), \(d=m\), and
\(r_i=2\), we get the required bound.
\end{proof}

\begin{remark}
The quadratic invariant and the Pfaffian enter the logarithmic theory in
different degrees. The quadratic invariant gives
\[
        B(\Phi,\Phi)\in H^0(C,K_C^2(D)),
\]
which is the cotangent space to pointed Teichmuller space. The Pfaffian gives
\[
        \operatorname{Pf}(\Phi)\in H^0(C,K_C^m((m-1)D)),
\]
and, under the rank condition above,
\[
        \operatorname{Pf}(\Phi)\in H^0(C,K_C^m((m-2)D)).
\]
Thus, for \(m>2\), the Pfaffian belongs to the higher Hitchin Hamiltonians
rather than to the ordinary Teichmuller cotangent direction.  In the exceptional
case \(m=2\), the Pfaffian also has degree two.
\end{remark}

\section{The logarithmic quadratic one-form}
\label{sec:logoneform}

Let
\[
        \pi:\mathcal M^{\mathrm{nil}}_{\mathrm{Dol}}(G)
        \longrightarrow
        \mathcal T_{g,r}
\]
be the smooth stable family fixed in Section~\ref{sec:prelim}. Thus the fibre
over \((C,D)\) consists of logarithmic \(G\)-Higgs bundles
\(
        (P,\Phi),
        \ \text{where} \
        \Phi\in H^0(C,\operatorname{ad}(P)\otimes K_C(D)),
\)
with nilpotent residues. By Corollary~\ref{cor:quadratic},
\[
        B(\Phi,\Phi)\in H^0(C,K_C^2(D)).
\]
On the other hand,
\[
        T^*_{(C,D)}\mathcal T_{g,r}
        \cong
        H^0(C,K_C^2(D)).
\]
Hence the quadratic differential
\(
        -\frac12B(\Phi,\Phi)
\)
is a cotangent vector to pointed Teichmuller space at \((C,D)\).  We denote the
corresponding one-form on the family by
\(
        \phi_{G,\log}.
\)
Thus, for
\[
        Y\in T_{(C,D)}\mathcal T_{g,r}
        \cong H^1(C,T_C(-D)),
\]
we set
\[
        \phi_{G,\log}(Y)
        =
        -\frac12
        \left\langle B(\Phi,\Phi),Y\right\rangle .
\]

\begin{theorem}
\label{thm:log-oneform}
The one-form
\(
        \phi_{G,\log}
\)
is a holomorphic section of
\(
        \pi^*\Omega^1_{\mathcal T_{g,r}}
\)
over
\(
        \mathcal M^{\mathrm{nil}}_{\mathrm{Dol}}(G).
\)
If \(Y\) is represented by a Dolbeault form
\[
        \nu\in A^{0,1}(C,T_C(-D))
\]
which vanishes near \(D\), then
\[
        \phi_{G,\log}(Y)
        =
        -\frac12\int_C B(\Phi,\Phi)\nu .
\]
\end{theorem}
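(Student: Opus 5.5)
The proof has two parts: holomorphicity of \(\phi_{G,\log}\), and the integral formula for a representative vanishing near \(D\).

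\textbf{Holomorphicity.} By Corollary~\ref{cor:quadratic}, for each point \((P,\Phi)\) of \(\mathcal M^{\mathrm{nil}}_{\mathrm{Dol}}(G)\) over \((C,D)\) we have \(B(\Phi,\Phi)\in H^0(C,K_C^2(D))\). Serre duality identifies this space with \(T^*_{(C,D)}\mathcal T_{g,r}\), so the assignment
\[
(P,\Phi)\longmapsto -\tfrac12 B(\Phi,\Phi)
\]
is a set-theoretic section of \(\pi^*\Omega^1_{\mathcal T_{g,r}}\). To see that it is holomorphic, work over a small open set of \(\mathcal T_{g,r}\) with the universal pointed curve \(\mathcal C\to\mathcal T_{g,r}\). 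On \(\mathcal C\) there is the relative bundle
\[
\omega^2_{\mathcal C/\mathcal T}(\mathcal D),
\]
whose direct image is the holomorphic vector bundle \(\Omega^1_{\mathcal T_{g,r}}\). This is the relative Serre duality isomorphism, and it is compatible with base change because \(h^0(K_C^2(D))=3g-3+r\) is constant.

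On a local universal family over \(\mathcal M^{\mathrm{nil}}_{\mathrm{Dol}}(G)\), the Higgs field is a holomorphic family \(\Phi\) of sections of \(\operatorname{ad}(\mathcal P)\otimes\omega_{\mathcal C/\mathcal T}(\mathcal D)\). Since \(B\) is a fixed bilinear form, \(B(\Phi,\Phi)\) depends holomorphically on the parameters. By Corollary~\ref{cor:quadratic} applied fibrewise, it is a holomorphic section of the pullback of \(\omega^2_{\mathcal C/\mathcal T}(\mathcal D)\), with only simple poles rather than the a priori double poles. Pushing forward gives a holomorphic section of \(\pi^*\Omega^1_{\mathcal T_{g,r}}\).

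If a universal family exists only étale locally or on a slice, the same argument applies there and glues. This is the step I expect to be the main obstacle. The key observations are that \(\phi_{G,\log}\) is gauge invariant (\(B\) is \(\operatorname{Ad}\)-invariant) and that holomorphicity is local. The smoothness and family structure are exactly what was assumed in fixing \(\mathcal M^{\mathrm{nil}}_{\mathrm{Dol}}(G)\).

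\textbf{Integral formula.} Let \(\nu\) be a Dolbeault representative vanishing near \(D\); one exists by Lemma~\ref{lem:vanishing-representative}. Set \(q=B(\Phi,\Phi)\). On the neighbourhoods \(U_i'\) of \(p_i\) where \(\nu\) vanishes, \(\nu=\bar\partial 0\), so \((\nu|_{C^\circ},0,\ldots,0)\) is a \v{C}ech--Dolbeault representative of \(Y\) with \(\xi_i=0\). Formula \eqref{eq:serre-local} then gives
\[
\langle q,Y\rangle=\int_{C^\circ}q\,\nu .
\]
The residue terms vanish because \(\xi_i=0\).

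The integrand \(q\nu\) is smooth, with compact support in \(C^\circ\), since \(\nu\equiv0\) near the poles of \(q\). Hence the integral over \(C^\circ\) equals the integral over \(C\) and converges absolutely. Multiplying by \(-\tfrac12\) yields
\[
\phi_{G,\log}(Y)=-\tfrac12\int_C B(\Phi,\Phi)\,\nu .
\]
Finally, this value is independent of the choice of such \(\nu\). Two choices differ by \(\bar\partial\eta\) with \(\eta\) a section of \(T_C(-D)\) that is holomorphic near \(D\). Stokes' theorem on the complement of small discs then produces boundary contributions of the form \(\operatorname{Res}_{p_i}(q\eta)\), and these vanish since \(\eta(p_i)=0\) and \(q\) has at most a simple pole. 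This is the same observation made after \eqref{eq:serre-local}.
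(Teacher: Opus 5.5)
Your proposal is correct and follows the paper's proof. Holomorphicity comes from the holomorphic (polynomial) dependence of \(B(\Phi,\Phi)\) on the Higgs field together with the Serre-duality identification \(H^0(C,K_C^2(D))\cong T^*_{(C,D)}\mathcal T_{g,r}\), and the integral formula comes from \(B(\Phi,\Phi)\nu\) being a smooth form on \(C\), so that the Serre pairing reduces to \(\int_C B(\Phi,\Phi)\nu\). Your universal-curve and base-change argument, and the representative \((\nu|_{C^\circ},0,\ldots,0)\), just spell out what the paper states in one line.
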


\begin{proof}
The first assertion follows from the polynomial dependence of
\(
        B(\Phi,\Phi)
\)
on the Higgs field, together with the Serre-duality identification
\[
        H^0(C,K_C^2(D))
        \cong
        T^*_{(C,D)}\mathcal T_{g,r}.
\]
Thus \(\phi_{G,\log}\) varies holomorphically along the smooth relative moduli
space. If \(\nu\) vanishes near \(D\), then the product \(B(\Phi,\Phi)\nu\) is a smooth
\((1,1)\)-form on \(C\).  In this case the Serre pairing is represented by the integral
\[
        \left\langle B(\Phi,\Phi),Y\right\rangle
        =
        \int_C B(\Phi,\Phi)\nu,
\]
which gives the required formula.
\end{proof}

\subsection{The local residue formula}

Let
\(
        Y\in H^1(C,T_C(-D))
\)
be represented by
\(
        (\mu,\xi_1,\ldots,\xi_r),
\)
where \(\mu\) is a Beltrami differential on \(C^\circ\), and \(\xi_i\) is a
local holomorphic vector field near \(p_i\).

\begin{proposition}
\label{prop:local-serre-formula}
Let
\(
        q=B(\Phi,\Phi),
\)
then
\[
        \phi_{G,\log}(Y)
        =
        -\frac12
        \left[
        \int_{C^\circ} B(\Phi,\Phi)\mu
        +
        \sum_{i=1}^{r}
        \operatorname{Res}_{p_i}
        \bigl(B(\Phi,\Phi)\xi_i\bigr)
        \right].
\]
\end{proposition}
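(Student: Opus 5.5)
The plan is to combine the definition of \(\phi_{G,\log}\) with the Čech--Dolbeault form \eqref{eq:serre-local} of the Serre pairing. By Corollary~\ref{cor:quadratic}, \(q=B(\Phi,\Phi)\) lies in \(H^0(C,K_C^2(D))\). The definition in Section~\ref{sec:logoneform} gives
\[
\phi_{G,\log}(Y)=-\tfrac12\langle q,Y\rangle .
\]
So it suffices to verify that \eqref{eq:serre-local} computes \(\langle q,Y\rangle\) when \(Y\) is represented by \((\mu,\xi_1,\ldots,\xi_r)\). Substituting \(q=B(\Phi,\Phi)\) then yields the stated formula directly.

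To make the identification with the Dolbeault definition explicit, I would compare with Theorem~\ref{thm:log-oneform}. Choose disjoint discs \(U_i\ni p_i\) and cutoffs \(\chi_i\) supported in \(U_i\), with \(\chi_i=1\) near \(p_i\). The Čech--Dolbeault cocycle condition says that \(\mu|_{U_i\setminus p_i}=\bar\partial\xi_i\). I read this as \(\mu=0\) near \(p_i\) if \(\xi_i\) is holomorphic, or more generally as \(\mu-\bar\partial\xi_i\) vanishing on the overlap. Then set
\[
\nu=\mu-\sum_i\bar\partial(\chi_i\xi_i)
\]
on \(C^\circ\). This form extends by zero across \(D\) and is a Dolbeault representative of \(Y\) vanishing near \(D\), as in Lemma~\ref{lem:vanishing-representative}. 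Theorem~\ref{thm:log-oneform} gives
\[
\phi_{G,\log}(Y)=-\tfrac12\int_C q\,\nu .
\]

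Next I would split \(\int_C q\nu\) as \(\int_{C^\circ}q\mu-\sum_i\int_{U_i}q\,\bar\partial(\chi_i\xi_i)\). On \(U_i\setminus\{|z|<\varepsilon\}\) we have \(q\,\bar\partial(\chi_i\xi_i)=d(\chi_i q\xi_i)\), since \(q\) is holomorphic there. By Stokes' theorem, and because \(\chi_i\) vanishes near \(\partial U_i\), each term becomes
\[
\int_{|z|=\varepsilon}q\xi_i ,
\]
with the orientation induced as the inner boundary. As \(\varepsilon\to0\) this tends to \(-2\pi i\operatorname{Res}_{p_i}(q\xi_i)\), up to the normalization of the pairing with the factor \(1/2\pi i\) built into \eqref{eq:serre-local}. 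It combines with the sign to give \(+\operatorname{Res}_{p_i}(q\xi_i)\). The integrand \(q\xi_i\) is a meromorphic one-form with at most a simple pole, because \(q\) has at most a simple pole and \(\xi_i\) is holomorphic. The limit is therefore exactly the residue, and it depends only on \(\xi_i(p_i)\), as remarked after \eqref{eq:serre-local}.

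The main obstacle is bookkeeping rather than substance. One must fix the sign and \(2\pi i\) conventions so that the boundary term matches \eqref{eq:serre-local}. One must also ensure the integral \(\int_{C^\circ}q\mu\) converges. This holds because, by the cocycle condition, \(\mu\) vanishes (or is \(\bar\partial\)-exact with compact-support control) near each \(p_i\), so \(q\mu\) is integrable. Since the paper takes \eqref{eq:serre-local} as the definition of the Serre pairing in Čech--Dolbeault form, the shortest proof is simply substitution. The Stokes computation serves only as a consistency check with Theorem~\ref{thm:log-oneform}.
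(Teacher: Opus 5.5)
Your core argument is exactly the paper's proof: Corollary~\ref{cor:quadratic} puts $q=B(\Phi,\Phi)$ in $H^0(C,K_C^2(D))$, the definition gives $\phi_{G,\log}(Y)=-\frac12\langle q,Y\rangle$, and you substitute the \v{C}ech--Dolbeault pairing \eqref{eq:serre-local}. Be aware, though, that your optional Stokes check does not close as you claim: with your own identity $q\,\bar\partial(\chi_i\xi_i)=d(\chi_iq\xi_i)$ it yields $\int_C q\nu=\int_{C^\circ}q\mu+2\pi i\sum_i\operatorname{Res}_{p_i}(q\xi_i)$, and since the paper's $\operatorname{Res}$ is already the coefficient of $dz/z$, there is no hidden $1/2\pi i$ in \eqref{eq:serre-local} to absorb this factor, so the check actually shows that \eqref{eq:serre-local} and Theorem~\ref{thm:log-oneform} agree only after rescaling the residue term (or the $\xi_i$), a normalization the paper leaves implicit.
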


\begin{proof}
By Corollary~\ref{cor:quadratic}, we have
\[
        B(\Phi,\Phi)\in H^0(C,K_C^2(D)).
\]
From the definition of \(\phi_{G,\log}\), we get
\[
        \phi_{G,\log}(Y)
        =
        -\frac12
        \left\langle B(\Phi,\Phi),Y\right\rangle .
\]
Therefore, by applying the local form of the Serre pairing from
\eqref{eq:serre-local}, we get
\[
        \left\langle B(\Phi,\Phi),Y\right\rangle
        =
        \int_{C^\circ} B(\Phi,\Phi)\mu
        +
        \sum_{i=1}^{r}
        \operatorname{Res}_{p_i}
        \bigl(B(\Phi,\Phi)\xi_i\bigr).
\]
This proves the formula.
\end{proof}

We now give the following results which will be used later.

\begin{proposition}
\label{prop:residue-well-defined}
Let
\(
        q\in H^0(C,K_C^2(D)).
\)
Then
\(
        \operatorname{Res}_{p_i}(q\,\xi_i)
\)
depends only on the tangent vector
\(
        \xi_i(p_i)\in T_{p_i}C.
\)
\end{proposition}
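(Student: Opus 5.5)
The plan is to reduce to the linearity of the residue in the vector field and to show that the residue vanishes on vector fields that vanish at \(p_i\); this is the argument already sketched after \eqref{eq:serre-local}, and we make it precise in a local coordinate. Fix a coordinate \(z\) centred at \(p_i\) on a disc \(U_i\). Since \(q\in H^0(C,K_C^2(D))\), it has at most a simple pole at \(p_i\), so on \(U_i\) we can write
\[
        q=\frac{g(z)}{z}\,dz^2,
\]
with \(g\) holomorphic on \(U_i\). A local holomorphic vector field is \(\xi_i=a(z)\,\partial_z\) with \(a\) holomorphic, and contraction gives the meromorphic one-form
\[
        q\,\xi_i=\frac{g(z)a(z)}{z}\,dz .
\]

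First, the map \(\xi_i\mapsto\operatorname{Res}_{p_i}(q\,\xi_i)\) is \(\mathbb C\)-linear, since both the contraction and the residue are linear. Second, suppose \(\xi_i\) and \(\xi_i'\) are local holomorphic vector fields with \(\xi_i(p_i)=\xi_i'(p_i)\). Then \(\eta_i=\xi_i-\xi_i'=b(z)\,\partial_z\) satisfies \(b(0)=0\), so \(b(z)=z\,c(z)\) with \(c\) holomorphic. Hence \(q\,\eta_i=g(z)c(z)\,dz\) is holomorphic at \(p_i\), and its residue vanishes.

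By linearity, \(\operatorname{Res}_{p_i}(q\,\xi_i)=\operatorname{Res}_{p_i}(q\,\xi_i')\). Explicitly, this common value is \(g(0)a(0)\), i.e. the pairing of the leading coefficient of \(q\) at \(p_i\) with \(\xi_i(p_i)\).

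There is no real obstacle here. The only point needing care is that the residue of a meromorphic one-form, defined as the coefficient of \(dz/z\), is independent of the chosen coordinate. This follows from the contour integral description \(\operatorname{Res}_p(\alpha)=\frac{1}{2\pi i}\int_{|z|=\varepsilon}\alpha\), and it ensures the conclusion is intrinsic.
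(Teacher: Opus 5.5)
Your proposal is correct and essentially matches the paper's proof: you write the difference \(\eta_i=\xi_i-\xi_i'\) as \(z\,c(z)\,\partial_z\), note that this cancels the simple pole of \(q\) so that \(q\,\eta_i\) is holomorphic with zero residue, and conclude by linearity. Your extra remarks, namely the explicit value \(g(0)a(0)\) and the coordinate-independence of the residue via the contour integral, are correct additions.
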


\begin{proof}
Let \(\xi_i'\) be another local holomorphic vector field near \(p_i\) such that
\(
        \xi_i'(p_i)=\xi_i(p_i).
\)
Then
\(
        \eta_i=\xi_i-\xi_i'
\)
vanishes at \(p_i\). In a coordinate \(z\) centred at \(p_i\), write
\[
        \eta_i=z\,g(z)\frac{\partial}{\partial z}
\]
with \(g\) holomorphic. Since \(q\) has at most a simple pole, locally
\[
        q=
        \left(
        \frac{a_i}{z}+h_i(z)
        \right)dz^2
\]
with \(h_i\) holomorphic. Hence
\[
        q\,\eta_i
        =
        \left(
        a_i g(z)+z h_i(z)g(z)
        \right)dz,
\]
which is holomorphic at \(p_i\). Therefore
\[
        \operatorname{Res}_{p_i}(q\,\eta_i)=0.
\]
The residue is unchanged when \(\xi_i\) is replaced by \(\xi_i'\).
\end{proof}

The integral term in Proposition~\ref{prop:local-serre-formula} is also
well-defined for bounded Beltrami representatives.

\begin{proposition}
\label{prop:local-convergence}
Let
\(
        q\in H^0(C,K_C^2(D)).
\)
If \(\mu\) is bounded near \(p_i\), then
\(
        \int q\,\mu
\)
is locally absolutely convergent near \(p_i\).
\end{proposition}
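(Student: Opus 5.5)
The plan is to reduce to a local estimate in a coordinate disc around \(p_i\). Choose a coordinate \(z\) centred at \(p_i\) on a disc \(U=\{|z|<\varepsilon\}\) containing no other marked point. Since \(q\in H^0(C,K_C^2(D))\), we can write, exactly as in the proof of Proposition~\ref{prop:residue-well-defined},
\[
        q=\left(\frac{a_i}{z}+h_i(z)\right)dz^2,
\]
with \(h_i\) holomorphic on \(U\). The Beltrami differential is written \(\mu=m(z)\,\frac{d\bar z}{dz}\). The statement that \(\mu\) is bounded near \(p_i\) means that \(|m(z)|\leq M\) on \(U\setminus\{0\}\); this is a coordinate-independent condition up to constants, since Beltrami coefficients transform by a factor of modulus one. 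Contracting gives
\[
        q\,\mu=\left(\frac{a_i}{z}+h_i(z)\right)m(z)\,dz\wedge d\bar z,
\]
and \(dz\wedge d\bar z=-2i\,dx\wedge dy\).

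The key step is the bound
\[
        |q\,\mu|\leq 2M\left(\frac{|a_i|}{|z|}+\sup_U|h_i|\right)dx\,dy
\]
on the punctured disc. After shrinking \(U\), \(h_i\) is bounded, so the second term is integrable on \(U\). For the first term, polar coordinates give
\[
        \int_{0<|z|<\varepsilon}\frac{dx\,dy}{|z|}
        =\int_0^{2\pi}\!\!\int_0^{\varepsilon}\frac{1}{\rho}\,\rho\,d\rho\,d\theta
        =2\pi\varepsilon<\infty .
\]
Hence \(\int_{U\setminus\{p_i\}}|q\,\mu|<\infty\), which is the claimed local absolute convergence. Measurability is immediate, since \(\mu\) is smooth on \(C^\circ\).

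There is no real obstacle; the only point to get right is that the simple pole is precisely the borderline that remains integrable in real dimension two. A double pole \(|z|^{-2}\) would give a logarithmic divergence. This is why the nilpotency input of Corollary~\ref{cor:quadratic}, which removes the double pole of \(B(\Phi,\Phi)\), is essential when one applies this to \(q=B(\Phi,\Phi)\).
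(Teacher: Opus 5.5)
Your proof is correct and follows the same route as the paper: write \(q=\bigl(\frac{a}{z}+h(z)\bigr)dz^2\) and \(\mu\) with a bounded coefficient in a coordinate centred at \(p_i\), bound \(|q\,\mu|\) by a constant times \(|z|^{-1}\,dx\,dy\), and integrate in polar coordinates. Your explicit constants and the remark on the coordinate independence of boundedness of Beltrami coefficients are correct additions.
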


\begin{proof}
Choose a coordinate \(z\) centred at \(p_i\). Locally,
\[
        q=
        \left(
        \frac{a}{z}+h(z)
        \right)dz^2,
        \qquad
        \mu=
        b(z)d\bar z\otimes\frac{\partial}{\partial z},
\]
where \(h\) is holomorphic and \(b\) is bounded. Then
\[
        |q\,\mu|
        \leq
        C|z|^{-1}\,dx\,dy
\]
near \(z=0\). In polar coordinates this is bounded by
\(
        C r^{-1}r\,dr\,d\theta,
\)
which is integrable near \(r=0\).  This proves the local convergence.
\end{proof}

\subsection{Circle action}

The circle action
\(
        (P,\Phi)\longmapsto (P,e^{i\theta}\Phi)
\)
preserves the nilpotent-residue locus. Indeed,
\[
        \operatorname{Res}_{p_i}(e^{i\theta}\Phi)
        =
        e^{i\theta}\operatorname{Res}_{p_i}(\Phi),
\]
and scalar multiples of nilpotent elements are nilpotent.

\begin{proposition}
\label{prop:circle-weight-log}
Under this circle action,
\(
        \phi_{G,\log}
        \longmapsto
        e^{2i\theta}\phi_{G,\log}.
\)
\end{proposition}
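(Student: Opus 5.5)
The plan is to compute directly from the definition \(\phi_{G,\log}(Y)=-\frac12\langle B(\Phi,\Phi),Y\rangle\), exploiting the fact that the Serre pairing is \(\mathbb C\)-linear in the quadratic differential. First, note that the circle action is well defined on the nilpotent-residue family, as observed just before the statement. It also preserves the underlying principal bundle \(P\) and the pointed curve \((C,D)\), so it acts fibrewise over \(\mathcal T_{g,r}\). Consequently \(Y\in H^1(C,T_C(-D))\) is unaffected by the action. This means the pullback of \(\phi_{G,\log}\) by the action map can be compared pointwise in \(Y\).

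Second, compute the quadratic differential of the rotated field. Bilinearity of \(B\) gives
\[
B(e^{i\theta}\Phi,e^{i\theta}\Phi)=e^{2i\theta}B(\Phi,\Phi).
\]
Corollary~\ref{cor:quadratic} applies to \(e^{i\theta}\Phi\), whose residues \(e^{i\theta}N_i\) are nilpotent, so both sides lie in \(H^0(C,K_C^2(D))\). Pairing with \(Y\) and using linearity of \(\langle\cdot,Y\rangle\) then gives \(\phi_{G,\log}(Y)\mapsto e^{2i\theta}\phi_{G,\log}(Y)\). One can make this concrete with Proposition~\ref{prop:local-serre-formula}: both the integral over \(C^\circ\) and each residue term \(\operatorname{Res}_{p_i}(B(\Phi,\Phi)\xi_i)\) scale by \(e^{2i\theta}\). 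Convergence of the integral is guaranteed by Proposition~\ref{prop:local-convergence}. Alternatively, one can use the representative \(\nu\) vanishing near \(D\) from Lemma~\ref{lem:vanishing-representative} together with Theorem~\ref{thm:log-oneform}.

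The only point needing care, and it is mild, is interpreting the statement as an identity of sections of \(\pi^*\Omega^1_{\mathcal T_{g,r}}\) under pullback by the fibrewise automorphism. This requires checking that the action commutes with \(\pi\), so that the cotangent spaces being compared are the same. That holds because the action does not move \((C,D)\). Once this is settled, the weight-two statement follows directly from the scalar identity above.
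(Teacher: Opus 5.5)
Your proposal is correct and follows the paper's argument: the paper likewise uses \(B(e^{i\theta}\Phi,e^{i\theta}\Phi)=e^{2i\theta}B(\Phi,\Phi)\) together with the linearity of the Serre pairing with \(H^1(C,T_C(-D))\). Your additional checks are left implicit in the paper, and they make precise the reading of the statement as an identity of sections of \(\pi^*\Omega^1_{\mathcal T_{g,r}}\). These checks are that the rotated field still has nilpotent residues, and that the action fixes \((C,D)\) and so leaves \(Y\) unchanged.
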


\begin{proof}
The invariant bilinear form \(B\) gives
\[
        B(e^{i\theta}\Phi,e^{i\theta}\Phi)
        =
        e^{2i\theta}B(\Phi,\Phi).
\]
Since \(\phi_{G,\log}\) is obtained from
\[
        -\frac12B(\Phi,\Phi)
\]
by the Serre-duality pairing with \(H^1(C,T_C(-D))\), the one-form has weight
two under the circle action.
\end{proof}

\subsection{The logarithmic Hitchin map}

Let
\(
        p_1,\ldots,p_\ell
\)
be homogeneous generators of
\(
        \mathbb C[\mathfrak g]^G,
\)
with degrees
\(
        d_1,\ldots,d_\ell.
\)
For logarithmic Higgs fields with arbitrary residues, the invariant
\(p_j(\Phi)\) has, a priori, pole order at most \(d_j\) along \(D\). Thus the
usual logarithmic Hitchin map takes values in
\[
        \bigoplus_{j=1}^{\ell}
        H^0(C,K_C^{d_j}(d_jD)).
\]
By Corollary~\ref{cor:nilpotent-hitchin-map}, on the nilpotent-residue locus
this map factors through the smaller space
\[
        \bigoplus_{j=1}^{\ell}
        H^0(C,K_C^{d_j}((d_j-1)D)).
\]
The degree-two invariant is the part relevant to pointed Teichmuller space.
Indeed, for
\(
        p_B(X)=\frac12B(X,X),
\)
we have
\[
        B(\Phi,\Phi)\in H^0(C,K_C^2(D)).
\]
Therefore, together with
\[
        H^0(C,K_C^2(D))
        \cong
        T^*_{(C,D)}\mathcal T_{g,r},
\]
this gives the logarithmic quadratic one-form
\(
        \phi_{G,\log}.
\)

\section{Logarithmic energy variation}
\label{sec:energy}

We prove the energy-variation formula for tame nilpotent harmonic bundles on
the punctured curve
\(
        C^\circ=C\setminus D.
\)
The algebraic construction of \(\phi_{G,\log}\) was independent of harmonic
metric estimates. The energy variation uses the tame asymptotic behaviour near
the punctures, together with the decay assumption stated below.

A tame harmonic bundle on \(C^\circ\) is a flat \(G\)-bundle together
with a harmonic metric whose associated Higgs field has logarithmic growth at
the punctures. In this paper we use only the nilpotent-residue case. Thus we consider a logarithmic Higgs bundle
\(
        (P,\Phi),
        \ \text{with}\
        \Phi\in H^0(C,\operatorname{ad}(P)\otimes K_C(D)),
\)
together with a harmonic metric \(h\) on \(C^\circ\), such that every residue of
\(\Phi\) is nilpotent. 

Let \(d_A\) be the Chern connection determined by the harmonic metric
\(h\), and let \(\tau\) denote the adjoint operation induced by \(h\). If we take
\(
        \varphi=\Phi+\tau(\Phi),
\)
then the associated flat connection is
\(
        \nabla=d_A+\varphi.
\)
Near a puncture, in a coordinate \(z\), the Higgs field has the form
\[
        \Phi=
        \left(
        \frac{N}{z}+A(z)
        \right)dz,
\]
where \(N\) is nilpotent and \(A(z)\) is holomorphic. With respect to the
complete cusp metric on \(C^\circ\),
\[
        |\varphi|_{h,\mathrm{cusp}}=O(1)
\]
near each puncture (see \cite{Simpson1990,MochizukiNilpotent}). 

We consider variations with fixed local monodromy conjugacy classes. The
infinitesimal variation of the harmonic metric is then expressed by a complex
gauge term. The required decay of this term is stated in
Assumption~\ref{ass:cusp-slice}.

\subsection{The cusp scale and the gauge term}

Near a point of \(D\), write
\[
        z=re^{i\theta},
        \qquad
        x=(-\log |z|^2)^{-1}.
\]
Then \(x\to0\) as \(z\to0\).  The Poincare metric near the puncture is
quasi-isometric to
\[
        \frac{dx^2}{x^2}+x^2d\theta^2.
\]
The vector fields of bounded length for this metric are generated locally by
\(
        x\partial_x,
        \ \text{and} \
        x^{-1}\partial_\theta.
\)
We denote by
\(
        C_{\mathrm{cusp}}^{k,\alpha}
\)
the Hölder space defined using these vector fields and the cusp distance.  For
\(\delta>0\), the weighted space
\(
        x^\delta C_{\mathrm{cusp}}^{k,\alpha}
\)
consists of sections \(u\) such that
\(
        x^{-\delta}u\in C_{\mathrm{cusp}}^{k,\alpha}.
\)

\begin{assumption}
\label{ass:cusp-slice}
For the variation under consideration, the local monodromy conjugacy classes
are fixed. The infinitesimal complex gauge term \(\psi\) satisfies, near each
puncture,
\(
        \psi\in x^\delta C_{\mathrm{cusp}}^{2,\alpha}
\)
for some
\(
        \delta>0,
        \
        0<\alpha<1.
\)
\end{assumption}

This assumption is used only in the integration-by-parts argument below, where it makes the boundary terms tend to zero.

\subsection{Finite energy}

Define
\[
        f_{\mathrm{par}}
        =
        -\frac12\int_{C^\circ}B(\varphi,*\varphi),
\]
provided the integral is absolutely convergent.

\begin{proposition}
\label{prop:finite-energy}
For a tame nilpotent harmonic bundle as above, the integral defining
\(f_{\mathrm{par}}\) is absolutely convergent.
\end{proposition}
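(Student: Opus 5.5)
The plan is to reduce absolute convergence to a local statement near each puncture and then use the cusp-metric bound \(|\varphi|_{h,\mathrm{cusp}}=O(1)\) quoted above from Simpson and Mochizuki. Away from \(D\), the integrand \(B(\varphi,*\varphi)\) is a smooth two-form on a compact set, so it contributes a finite amount. It therefore suffices to fix \(p_i\in D\), a coordinate disc \(U=\{|z|<\varepsilon\}\), and show that \(\int_{U\setminus\{0\}}|B(\varphi,*\varphi)|<\infty\).

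The first step is pointwise. Since \(B\) is \(\operatorname{Ad}\)-invariant and nondegenerate, after fixing the \(K\)-reduction there is a constant \(c\) such that \(|B(X,Y)|\le c\,|X|_h|Y|_h\) on \(\operatorname{ad}(P)\). For \(G=GL(n,\mathbb C)\) with the trace form this is Cauchy--Schwarz; in general one compares \(B\) with the positive form \(-B(\cdot,\tau\cdot)\) on each simple and central factor. Hence, for any conformal metric \(g\) on \(C^\circ\),
\[
        |B(\varphi,*\varphi)| \le c\,|\varphi|_{h,g}^2\,\mathrm{dvol}_g .
\]
The second step uses conformal invariance. On a surface, the two-form \(\alpha\wedge*\beta\) for one-forms does not depend on the choice of conformal metric, because \(|\alpha|_g^2\,\mathrm{dvol}_g\) is conformally invariant in real dimension two. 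So we may take \(g\) to be the complete cusp metric near \(p_i\).

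The third step is to integrate \(|\varphi|_{h,\mathrm{cusp}}^2=O(1)\) against the cusp volume form. The cusp volume near a puncture is finite: the model metric is \(|dz|^2/(|z|^2(\log|z|)^2)\), and its area over \(0<|z|<\varepsilon\) equals \(2\pi\int_0^\varepsilon dr/(r(\log r)^2)<\infty\). This bounds the local integral by a constant times the cusp area, which is finite. Equivalently, in the coordinates \(x=(-\log|z|^2)^{-1}\) and \(\theta\), the area form \(x^{-1}dx\,d\theta\cdot x\) is integrable as \(x\to0\).

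The main point to check carefully is the pointwise comparison between \(B\) and the harmonic metric. The tame estimate is stated for \(|\varphi|_h\) in the cusp metric, so the constant \(c\) has to be uniform. This holds because \(c\) depends only on \(B\) and on the compact form \(\tau\) on \(\mathfrak g\), not on the point of \(C^\circ\).

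As a sanity check on this plan, the naive Euclidean estimate \(|\Phi|^2\sim|z|^{-2}\) would diverge logarithmically. It is exactly the nilpotency of the residue, which is built into Simpson's and Mochizuki's bound through the \(\log\) correction \(|N/z|_h\sim 1/(|z|\,|\log|z||)\), that gives convergence. So the proof must rely on the cited asymptotics and not on holomorphic data alone.
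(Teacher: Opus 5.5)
Your proposal is correct and follows the paper's proof. You localize at a puncture, bound \(|B(\varphi,*\varphi)|\) by a constant times \(|\varphi|_{h,\mathrm{cusp}}^2\,d\mathrm{vol}_{\mathrm{cusp}}\), invoke the tame nilpotent estimate \(|\varphi|_{h,\mathrm{cusp}}=O(1)\), and conclude from the finiteness of \(\int_0^\epsilon dr/(r(\log r)^2)\), exactly as the paper does. Your extra details (conformal invariance of \(\alpha\wedge*\beta\), uniformity of the constant) only make explicit what the paper leaves implicit. For uniformity it is enough that \(B\) and \(|\cdot|_h\) are both constant in \(K\)-adapted frames; no positivity of \(-B(\cdot,\tau\cdot)\) is needed, and that positivity can in fact fail for a general invariant \(B\).
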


\begin{proof}
It is enough to prove convergence near each point of \(D\). By the tame
nilpotent estimate,
\[
        |\varphi|_{h,\mathrm{cusp}}=O(1)
\]
near a puncture. Since \(B\) is fixed, there is a constant \(C>0\) such that
\[
        |B(\varphi,*\varphi)|
        \leq
        C|\varphi|_{h,\mathrm{cusp}}^2\,d\mathrm{vol}_{\mathrm{cusp}}.
\]
Thus the local contribution to the energy is bounded by a constant multiple of
the cusp area form.

In a punctured coordinate disc, the cusp area form is comparable to
\[
        \frac{dr\,d\theta}{r(\log r)^2}.
\]
Hence the local integral is bounded by
\[
        C\int_0^\epsilon \frac{dr}{r(\log r)^2},
\]
which is finite. Since \(D\) is finite, the integral over \(C^\circ\) is
absolutely convergent.
\end{proof}

\subsection{Boundary terms from the gauge variation}

Choose pairwise disjoint coordinate discs around the points of \(D\), and put
\[
        C_\varepsilon
        =
        C\setminus\bigcup_{i=1}^{r}\{|z_i|<\varepsilon\}.
\]
The first-variation formula is first applied on the compact surface
\(C_\varepsilon\).  The integration-by-parts terms coming from the complex gauge
variation are supported on \(\partial C_\varepsilon\).  The following estimate
is the only place where Assumption~\ref{ass:cusp-slice} is used.

\begin{lemma}
\label{lem:gauge-boundary}
Under Assumption~\ref{ass:cusp-slice}, the boundary terms arising from the
complex gauge variation over \(C_\varepsilon\) tend to zero as
\(\varepsilon\to0\).
\end{lemma}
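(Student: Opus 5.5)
We first identify the boundary terms explicitly, by repeating the compact first-variation computation of Proposition~\ref{prop:compact-variation} on \(C_\varepsilon\). Write the gauge variation as \(\dot A=d_A\psi_1+[\varphi,\psi_2]\) and \(\dot\varphi=d_A\psi_2+[\varphi,\psi_1]\), with \(\psi=\psi_1+\psi_2\) the infinitesimal complex gauge term of Assumption~\ref{ass:cusp-slice}. The terms in \(\dot f\) containing \([\varphi,\psi_1]\) cancel pointwise by \(\operatorname{Ad}\)-invariance of \(B\), so they give no boundary contribution. The terms containing \(d_A\psi_2\) are handled by Stokes' theorem:
\[
        \int_{C_\varepsilon}B(d_A\psi_2,*\varphi)
        =
        \int_{\partial C_\varepsilon}B(\psi_2,*\varphi)
        -
        \int_{C_\varepsilon}B(\psi_2,d_A{*}\varphi).
\]
The last integral vanishes since \(d_A{*}\varphi=0\), and the analogous term with \(\varphi\) uses \(d_A\varphi=0\). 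Hence the boundary terms are, up to constants, sums over \(i\) of
\[
        \int_{|z_i|=\varepsilon}B(\psi_2,*\varphi)
        \quad\text{and}\quad
        \int_{|z_i|=\varepsilon}B(\psi_2,\varphi).
\]
If the variation of the complex structure is represented, via Lemma~\ref{lem:vanishing-representative}, by a form vanishing near \(D\), then \(\dot *=0\) near the punctures, and no other terms reach \(\partial C_\varepsilon\).

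Next we estimate each circle integral in cusp coordinates \(x=(-\log|z|^2)^{-1}\). On \(|z|=\varepsilon\) we have \(x=x_\varepsilon\to0\), and the boundary circle has cusp length comparable to \(x_\varepsilon\), since the metric is \(x^2d\theta^2\) there. The tame nilpotent estimate gives \(|\varphi|_{h,\mathrm{cusp}}=O(1)\), and the Hodge star is isometric, so \(|*\varphi|_{h,\mathrm{cusp}}=O(1)\) as well. Restricting a one-form of bounded cusp norm to the circle and integrating in \(\theta\) gives
\[
        \Bigl|\int_{|z|=\varepsilon}B(\psi_2,*\varphi)\Bigr|
        \le
        C\,\sup_{|z|=\varepsilon}|\psi_2|_h\cdot x_\varepsilon\cdot 2\pi.
\]
Here the pairing of a \(d\theta\)-component against the unit covector \(x\,d\theta\) produces the factor \(x_\varepsilon\). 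By Assumption~\ref{ass:cusp-slice}, \(|\psi_2|_h\le C x^\delta\), using only the \(C^0\) part of the weighted Hölder norm. So each boundary term is \(O(x_\varepsilon^{1+\delta})\), which tends to zero as \(\varepsilon\to0\). In fact boundedness of \(\psi\) together with the factor \(x_\varepsilon\) would already suffice; the positive \(\delta\) gives margin.

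The main obstacle is to justify these pointwise bounds in the correct norms. The \(h\)-norm of \(\psi_2\) and the \(h\)-norm of \(\varphi\) must be measured against the same harmonic metric, and the asymptotically nilpotent metric behaves like \(|\log|z||^{\pm k}\) on graded pieces. We will use that the cusp-Hölder spaces in the assumption and the Simpson--Mochizuki estimate \(|\varphi|_{h,\mathrm{cusp}}=O(1)\) are both taken with respect to \(h\), so \(|B(\psi_2,\varphi)|\le C|\psi_2|_h|\varphi|_h\) holds with a uniform constant, because \(B\) is \(K\)-invariant up to fixed comparison. We will also check that the derivative control in \(C^{2,\alpha}_{\mathrm{cusp}}\) is needed only to make \(\dot\varphi\) integrable on \(C^\circ\). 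This integrability lets us pass from \(C_\varepsilon\) to \(C^\circ\) by dominated convergence, together with Proposition~\ref{prop:finite-energy}.
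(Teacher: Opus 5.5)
Your proposal is correct and follows essentially the same route as the paper. You bound each circle integral pairing \(\psi\) with \(\varphi\) or \(*\varphi\) by \(O(x_\varepsilon^{\delta})\cdot O(1)\cdot O(x_\varepsilon)=O(x_\varepsilon^{1+\delta})\), using Assumption~\ref{ass:cusp-slice}, the tame estimate \(|\varphi|_{h,\mathrm{cusp}}=O(1)\), and the cusp length of \(|z|=\varepsilon\). The differences are cosmetic: you derive the boundary terms explicitly by Stokes and evaluate the tangential \(d\theta\)-component, where the paper contracts with the cusp unit normal. Your remark that mere boundedness of \(\psi\) would already suffice is consistent with the paper's own estimate.
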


\begin{proof}
It is enough to estimate the contribution near one puncture. Let
\(
        x=(-\log |z|^2)^{-1}.
\)
On the circle \(|z|=\varepsilon\), we have \(x=x_\varepsilon\), where
\(x_\varepsilon\to0\) as \(\varepsilon\to0\). The cusp length of this circle is
\(
        O(x_\varepsilon).
\)
The tame nilpotent estimate gives
\[
        |\varphi|_{h,\mathrm{cusp}}=O(1).
\]
By Assumption~\ref{ass:cusp-slice},
\[
        |\psi|_h=O(x_\varepsilon^\delta)
\]
on \(|z|=\varepsilon\), for some \(\delta>0\). The boundary terms produced by the gauge part of the variation are finite sums
of integrals of the form
\[
        \int_{|z|=\varepsilon}
        B(\psi,\iota_n\varphi)\,ds_{\mathrm{cusp}},
        \qquad
        \int_{|z|=\varepsilon}
        B(\psi,\iota_n *\varphi)\,ds_{\mathrm{cusp}},
\]
where \(n\) is the cusp unit normal. On \(|z|=\varepsilon\), the contraction with the cusp unit normal satisfies
\[
        |\iota_n\varphi|_{h}\leq |\varphi|_{h,\mathrm{cusp}},
        \qquad
        |\iota_n *\varphi|_{h}\leq |\varphi|_{h,\mathrm{cusp}}.
\]
Hence, for a boundary term of the first type, we have
\[
\begin{aligned}
\left|
        \int_{|z|=\varepsilon}
        B(\psi,\iota_n\varphi)\,ds_{\mathrm{cusp}}
\right|
&\leq
C
\sup_{|z|=\varepsilon}|\psi|_h\,
\sup_{|z|=\varepsilon}|\varphi|_{h,\mathrm{cusp}}\,
\operatorname{length}_{\mathrm{cusp}}(|z|=\varepsilon)  \\
&\leq
C\,x_\varepsilon^\delta\cdot 1\cdot x_\varepsilon
=
O(x_\varepsilon^{1+\delta}).
\end{aligned}
\]
The same estimate applies to the term with \(\iota_n *\varphi\). Since
\(\delta>0\), these boundary integrals tend to zero as \(\varepsilon\to0\). Summing over the finitely many punctures proves the lemma.
\end{proof}

\subsection{The variation formula}

Let
\[
        Y\in T_{(C,D)}\mathcal T_{g,r}
        \cong
        H^1(C,T_C(-D)).
\]
By Lemma~\ref{lem:vanishing-representative}, choose a representative
\[
        \nu\in A^{0,1}(C,T_C(-D))
\]
which vanishes on a neighbourhood of \(D\).

\begin{lemma}
\label{lem:first-variation-limit}
Let
\(
        Y\in H^1(C,T_C(-D))
\)
be represented by a Dolbeault form
\(
        \nu\in A^{0,1}(C,T_C(-D))
\)
which vanishes on a neighbourhood of \(D\).  Suppose that the Betti point and
the local monodromy conjugacy classes are fixed, and that
Assumption~\ref{ass:cusp-slice} holds.  Then the first variations of the
truncated energies satisfy
\[
        \lim_{\varepsilon\to0}Y(f_{\mathrm{par},\varepsilon})
        =
        Y(f_{\mathrm{par}}).
\]
\end{lemma}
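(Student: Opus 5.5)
The plan is to compare the truncated and full energies and show that differentiation in \(Y\) commutes with the limit \(\varepsilon\to0\). Write
\[
        f_{\mathrm{par},\varepsilon}
        =
        -\frac12\int_{C_\varepsilon}B(\varphi,*\varphi),
\]
so that \(f_{\mathrm{par}}-f_{\mathrm{par},\varepsilon}\) is the energy contained in the punctured discs \(\{|z_i|<\varepsilon\}\). By Proposition~\ref{prop:finite-energy}, this tail tends to zero for each fixed harmonic bundle. Since \(\nu\) vanishes on a neighbourhood of \(D\), the variation of the complex structure is supported in \(C_{\varepsilon_0}\) for some \(\varepsilon_0>0\). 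Hence, for \(\varepsilon<\varepsilon_0\), the discs \(\{|z_i|<\varepsilon\}\) and their local coordinates do not move to first order. It is therefore legitimate to differentiate \(f_{\mathrm{par},\varepsilon}\) on a fixed domain.

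The first step is to compute \(Y(f_{\mathrm{par},\varepsilon})\) exactly as in the proof of Proposition~\ref{prop:compact-variation}. Use the complex gauge decomposition
\[
        \dot A=d_A\psi_1+[\varphi,\psi_2],
        \qquad
        \dot\varphi=d_A\psi_2+[\varphi,\psi_1],
        \qquad
        \psi=\psi_1+\psi_2 .
\]
This is available because the Betti point and the local monodromy classes are fixed. The \([\varphi,\psi_1]\) terms cancel pointwise by \(\operatorname{Ad}\)-invariance of \(B\). The \(d_A\psi_2\) terms integrate by parts on \(C_\varepsilon\) using \(d_A\varphi=d_A{*}\varphi=0\), and leave only boundary integrals over \(\partial C_\varepsilon\). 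The Hodge-star term is supported in \(\operatorname{supp}\nu\subset C_{\varepsilon_0}\). This gives
\[
        Y(f_{\mathrm{par},\varepsilon})
        =
        -\frac12\operatorname{Re}\int_C B(\Phi,\Phi)\nu
        +R_\varepsilon ,
\]
where \(R_\varepsilon\) is a finite sum of boundary integrals of the type treated in Lemma~\ref{lem:gauge-boundary}. That lemma gives \(R_\varepsilon=O(x_\varepsilon^{1+\delta})\to0\). So the limit of \(Y(f_{\mathrm{par},\varepsilon})\) exists and equals the first term.

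The second step is to identify this limit with \(Y(f_{\mathrm{par}})\). The cleanest route is to apply the same computation to the tail energy \(f_{\mathrm{par}}-f_{\mathrm{par},\varepsilon}\) over the discs \(\{|z_i|<\varepsilon\}\).

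1. There \(\nu=0\), so the Hodge-star term vanishes.
2. The \([\varphi,\psi_1]\) terms cancel pointwise.
3. The gauge terms reduce to the boundary at \(|z|=\varepsilon\), which is the same circle with opposite orientation. By Lemma~\ref{lem:gauge-boundary} this contributes \(O(x_\varepsilon^{1+\delta})\).
4. There are also boundary contributions at the inner boundary \(|z|=\varepsilon'\to0\), obtained by exhausting the disc. These vanish by the same estimate with \(x_{\varepsilon'}\to0\).

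Hence \(Y(f_{\mathrm{par}}-f_{\mathrm{par},\varepsilon})=O(x_\varepsilon^{1+\delta})\), and the lemma follows.

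The main obstacle is justifying that \(Y(f_{\mathrm{par}})\) exists and may be computed by differentiating under the integral on the non-compact disc. This amounts to a dominated-convergence argument for the difference quotients of \(B(\varphi,*\varphi)\). I would handle it as follows. Write \(\dot\varphi=d_A\psi_2+[\varphi,\psi_1]\). Note that \(\psi\in x^\delta C^{2,\alpha}_{\mathrm{cusp}}\) gives \(|d_A\psi|_{\mathrm{cusp}}=O(x^\delta)\), because the cusp Hölder norm controls derivatives along \(x\partial_x\) and \(x^{-1}\partial_\theta\). Combined with \(|\varphi|_{h,\mathrm{cusp}}=O(1)\), the integrand of the variation is bounded by a multiple of the cusp area form, which is integrable by the computation in Proposition~\ref{prop:finite-energy}. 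This yields the integrable domination needed to differentiate under the integral and to pass to the limit.
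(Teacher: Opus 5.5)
This is correct and is essentially the paper's argument: the cusp region contributes nothing through the Hodge-star variation because $\nu=0$ there, and its gauge-variation contribution goes to zero by the same $O(x_\varepsilon^{1+\delta})$ estimate as in Lemma~\ref{lem:gauge-boundary}. Your Step~1 is really the content of Theorem~\ref{thm:log-energy} and is not needed for the lemma. Your domination argument spells out a point the paper takes for granted, though strictly it needs the bounds to hold uniformly along the one-parameter family, not only at the base point; the paper leaves this implicit too.
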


\begin{proof}
Since \(\nu\) is zero on a neighbourhood of \(D\), the complex-structure part of
the first variation has no contribution from the cusp region
\(
        C^\circ\setminus C_\varepsilon
\)
for all sufficiently small \(\varepsilon\).

It remains to consider the terms coming from the complex gauge variation.  The
tame nilpotent estimate gives
\(
        |\varphi|_{h,\mathrm{cusp}}=O(1),
\)
and Assumption~\ref{ass:cusp-slice} gives
\(
        |\psi|_h=O(x^\delta)
\)
for some \(\delta>0\).  The same estimate as in
Lemma~\ref{lem:gauge-boundary} shows that the contribution of the cusp region
to the first variation tends to zero as \(\varepsilon\to0\).  Hence the first
variation of the truncated energies converges to the first variation of the
finite-energy functional.
\end{proof}

\begin{theorem}
\label{thm:log-energy}
Let \((P,\Phi,h)\) be a tame nilpotent harmonic bundle on \(C^\circ\). Consider
a first-order deformation in the direction
\(
        Y\in T_{(C,D)}\mathcal T_{g,r}
\)
with fixed Betti representation and fixed local monodromy conjugacy classes.
Assume that Assumption~\ref{ass:cusp-slice} holds for the corresponding
infinitesimal complex gauge term. Then
\[
        Y(f_{\mathrm{par}})
        =
        -\frac12\operatorname{Re}
        \int_C B(\Phi,\Phi)\nu.
\]
Equivalently,
\[
        Y(f_{\mathrm{par}})
        =
        \operatorname{Re}\phi_{G,\log}(Y).
\]
\end{theorem}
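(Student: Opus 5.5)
The plan is to reduce the statement to the compact computation of Proposition~\ref{prop:compact-variation}, carried out on the truncated surfaces \(C_\varepsilon\), and then pass to the limit using Lemmas~\ref{lem:gauge-boundary} and \ref{lem:first-variation-limit}.

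\textbf{Choice of representative and truncation.} By Lemma~\ref{lem:vanishing-representative}, I first fix a Dolbeault representative \(\nu\) of \(Y\) which vanishes on a neighbourhood of \(D\). Then I choose \(\varepsilon_0\) so that \(\nu\equiv0\) on \(\{|z_i|<\varepsilon_0\}\) for all \(i\). For \(\varepsilon<\varepsilon_0\), the deformation of complex structure is then supported in the interior of \(C_\varepsilon\), and the marked points do not move to first order in this representative. I set
\[
f_{\mathrm{par},\varepsilon}=-\frac12\int_{C_\varepsilon}B(\varphi,*\varphi).
\]

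\textbf{Variation on the compact piece.} With the Betti point and the local monodromy classes fixed, I write the first-order change of the flat connection as an infinitesimal complex gauge transformation \(\psi=\psi_1+\psi_2\), exactly as in the proof of Proposition~\ref{prop:compact-variation}:
\[
\dot A=d_A\psi_1+[\varphi,\psi_2],\qquad \dot\varphi=d_A\psi_2+[\varphi,\psi_1].
\]
Differentiating \(f_{\mathrm{par},\varepsilon}\) produces three kinds of terms.
\begin{itemize}
\item The \([\varphi,\psi_1]\)-terms vanish pointwise, by \(\operatorname{Ad}\)-invariance of \(B\) and the symmetry of \(\alpha\wedge*\beta\).
\item The \(d_A\psi_2\)-terms are integrated by parts using \(d_A\varphi=d_A*\varphi=0\). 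They reduce to integrals over \(\partial C_\varepsilon\) of the form \(\int B(\psi,\iota_n\varphi)\,ds\) and \(\int B(\psi,\iota_n*\varphi)\,ds\).
\item The remaining term comes from varying the Hodge star, \(-\frac12\int_{C_\varepsilon}B(\varphi,\dot*\varphi)\). Since \(\dot *\) acts through \(\nu\) and \(*\) is conformally invariant on one-forms, the pointwise computation from the compact case gives \(-\frac12\operatorname{Re}\int_{C_\varepsilon}B(\Phi,\Phi)\nu\).
\end{itemize}
Because \(\nu\) is supported in \(C_{\varepsilon_0}\), this last integral equals \(-\frac12\operatorname{Re}\int_C B(\Phi,\Phi)\nu\) for every \(\varepsilon<\varepsilon_0\). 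The integrand here is smooth, so no convergence issue arises.

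\textbf{Limit and identification.} By Lemma~\ref{lem:gauge-boundary}, which is where Assumption~\ref{ass:cusp-slice} enters, the boundary terms are \(O(x_\varepsilon^{1+\delta})\to0\). Hence \(\lim_{\varepsilon\to 0}Y(f_{\mathrm{par},\varepsilon})=-\frac12\operatorname{Re}\int_C B(\Phi,\Phi)\nu\). Lemma~\ref{lem:first-variation-limit} identifies this limit with \(Y(f_{\mathrm{par}})\). Here Proposition~\ref{prop:finite-energy} guarantees that \(f_{\mathrm{par}}\) is defined. Finally, Corollary~\ref{cor:quadratic} gives \(B(\Phi,\Phi)\in H^0(C,K_C^2(D))\), and Theorem~\ref{thm:log-oneform} gives \(\phi_{G,\log}(Y)=-\frac12\int_C B(\Phi,\Phi)\nu\) for such \(\nu\). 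This yields the equivalent form \(Y(f_{\mathrm{par}})=\operatorname{Re}\phi_{G,\log}(Y)\).

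\textbf{Main obstacle.} The step I expect to be hardest is the bookkeeping in the integration by parts on \(C_\varepsilon\). I need to check that every boundary term really has the form controlled in Lemma~\ref{lem:gauge-boundary}, with \(\psi\) paired against a bounded contraction of \(\varphi\), and that no term involving \(d_A\psi\) survives on \(\partial C_\varepsilon\). The \(C^{2,\alpha}_{\mathrm{cusp}}\) control in Assumption~\ref{ass:cusp-slice} should bound derivatives of \(\psi\) by \(x^\delta\) in cusp norms if such a term appears. A second point to check is that differentiating under the integral is legitimate. This should follow from absolute convergence together with the uniform estimate on the cusp region in Lemma~\ref{lem:first-variation-limit}.
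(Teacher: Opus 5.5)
Your proposal is correct and follows the paper's proof essentially step for step: you take a representative \(\nu\) vanishing near \(D\), run the compact first-variation computation on \(C_\varepsilon\), kill the gauge boundary terms with Lemma~\ref{lem:gauge-boundary}, pass to the limit with Lemma~\ref{lem:first-variation-limit}, and identify the result with \(\operatorname{Re}\phi_{G,\log}(Y)\) through Theorem~\ref{thm:log-oneform}. The obstacle you flag about derivatives of \(\psi\) on \(\partial C_\varepsilon\) does not arise, because \(d_A{*}\varphi=0\) gives \(B(d_A\psi_2\wedge *\varphi)=d\,B(\psi_2,*\varphi)\), so only \(\psi_2\) itself, paired with \(*\varphi\), appears on the boundary.
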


\begin{proof}
Choose disjoint coordinate discs around the points of \(D\), and put
\[
        C_\varepsilon
        =
        C\setminus\bigcup_i\{|z_i|<\varepsilon\}.
\]
For sufficiently small \(\varepsilon\), the form \(\nu\) vanishes in a
neighbourhood of \(\partial C_\varepsilon\).  Define the truncated energy by
\[
        f_{\mathrm{par},\varepsilon}
        =
        -\frac12
        \int_{C_\varepsilon}B(\varphi,*\varphi).
\]
On \(C_\varepsilon\), all fields are smooth and the surface is compact with
boundary. Since the Betti representation is fixed, the infinitesimal variation
of the flat connection is a complex gauge variation, as in
Proposition~\ref{prop:compact-variation}. Then by the first-variation formula
on \(C_\varepsilon\), we get
\[
        Y(f_{\mathrm{par},\varepsilon})
        =
        -\frac12\operatorname{Re}
        \int_{C_\varepsilon}B(\Phi,\Phi)\nu
        +
        \mathcal B_\varepsilon,
\]
where \(\mathcal B_\varepsilon\) is the sum of the boundary terms coming from
the gauge variation. There is no boundary contribution from the variation of
complex structure, because \(\nu\) vanishes near \(\partial C_\varepsilon\).

By Lemma~\ref{lem:gauge-boundary},
\[
        \mathcal B_\varepsilon\longrightarrow0
        \qquad
        \text{as }\varepsilon\to0.
\]
Since \(\nu\) vanishes near \(D\), the integral
\[
        \int_{C_\varepsilon}B(\Phi,\Phi)\nu
\]
is independent of \(\varepsilon\) for all sufficiently small \(\varepsilon\), and
is equal to
\[
        \int_C B(\Phi,\Phi)\nu.
\]
By Lemma~\ref{lem:first-variation-limit},
\[
        \lim_{\varepsilon\to0}Y(f_{\mathrm{par},\varepsilon})
        =
        Y(f_{\mathrm{par}}).
\]
Passing to the limit in the first-variation formula therefore gives
\[
        Y(f_{\mathrm{par}})
        =
        -\frac12\operatorname{Re}
        \int_C B(\Phi,\Phi)\nu.
\]
The equality
\[
        Y(f_{\mathrm{par}})
        =
        \operatorname{Re}\phi_{G,\log}(Y)
\]
now follows from Theorem~\ref{thm:log-oneform}.
\end{proof}

Combining this with the local expression of
\(\phi_{G,\log}\) gives the residue form of the variation formula.

\begin{corollary}
\label{cor:log-energy-residue}
Let
\(
        Y\in H^1(C,T_C(-D))
\)
be represented by
\(
        (\mu,\xi_1,\ldots,\xi_r),
\)
where \(\mu\) is a Beltrami differential on \(C^\circ\), and \(\xi_i\) is a
local holomorphic vector field near \(p_i\). Under the hypotheses of
Theorem~\ref{thm:log-energy},
\[
        Y(f_{\mathrm{par}})
        =
        -\frac12\operatorname{Re}
        \left[
        \int_{C^\circ}B(\Phi,\Phi)\mu
        +
        \sum_{i=1}^{r}
        \operatorname{Res}_{p_i}
        \bigl(B(\Phi,\Phi)\xi_i\bigr)
        \right].
\]
\end{corollary}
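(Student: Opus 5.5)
The corollary follows by combining Theorem~\ref{thm:log-energy} with Proposition~\ref{prop:local-serre-formula}; the only work is to check that the two representatives of \(Y\) compute the same number. Under the hypotheses of Theorem~\ref{thm:log-energy}, we first fix a Dolbeault representative \(\nu\) of \(Y\) that vanishes near \(D\), as provided by Lemma~\ref{lem:vanishing-representative}. Then Theorem~\ref{thm:log-energy} gives
\[
        Y(f_{\mathrm{par}})=\operatorname{Re}\phi_{G,\log}(Y),
\]
and, by Theorem~\ref{thm:log-oneform}, the value \(\phi_{G,\log}(Y)=-\frac12\langle B(\Phi,\Phi),Y\rangle\) is intrinsic. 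It is the Serre pairing of the class \(B(\Phi,\Phi)\in H^0(C,K_C^2(D))\), supplied by Corollary~\ref{cor:quadratic}, with the class \(Y\), so it does not depend on the representative used to compute it.

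Next we evaluate the same pairing with the \v{C}ech--Dolbeault representative \((\mu,\xi_1,\ldots,\xi_r)\). Proposition~\ref{prop:local-serre-formula}, which rests on \eqref{eq:serre-local}, expresses \(\phi_{G,\log}(Y)\) as
\[
        -\frac12\left[\int_{C^\circ}B(\Phi,\Phi)\mu+\sum_{i=1}^r\operatorname{Res}_{p_i}\bigl(B(\Phi,\Phi)\xi_i\bigr)\right].
\]
Taking real parts and substituting into the identity above gives the corollary.

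The main point to verify is the consistency of the two computations of the pairing. We argue this directly. On a disc \(U_i\) around \(p_i\), with \(\mu\) bounded, the two representatives differ by \(\nu-\mu=\bar\partial\eta\) on \(C^\circ\), where \(\eta\) is smooth and agrees with \(\xi_i\) near \(p_i\) up to a term vanishing at \(p_i\). We then apply Stokes' theorem on \(C_\varepsilon\). Because \(q=B(\Phi,\Phi)\) is holomorphic on \(C^\circ\), we have \(\int_{C_\varepsilon}q\,\bar\partial\eta=-\sum_i\int_{|z_i|=\varepsilon}q\,\eta\). As \(\varepsilon\to0\), each boundary integral tends to \(2\pi i\,\operatorname{Res}_{p_i}(q\xi_i)\) up to the normalization built into \eqref{eq:serre-local}.

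Two earlier results control these limits. Proposition~\ref{prop:residue-well-defined} shows that terms of \(\eta\) vanishing at \(p_i\) contribute nothing, since \(q\) has only a simple pole. Proposition~\ref{prop:local-convergence} shows that the integral over \(C^\circ\) converges absolutely, so removing the discs is harmless.
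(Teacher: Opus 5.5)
Your proposal is correct and follows the paper's proof exactly: Theorem~\ref{thm:log-energy} gives \(Y(f_{\mathrm{par}})=\operatorname{Re}\phi_{G,\log}(Y)\), and Proposition~\ref{prop:local-serre-formula} evaluates \(\phi_{G,\log}(Y)\) on the \v{C}ech--Dolbeault representative \((\mu,\xi_1,\ldots,\xi_r)\). Your extra Stokes-theorem check only re-derives the consistency of \eqref{eq:serre-local}, which the paper takes as the standard form of the Serre pairing. If you keep that check, note that \(\eta-\xi_i\) is merely smooth, so the vanishing of its boundary contribution should come from a direct estimate on \(|z_i|=\varepsilon\) (using that \(q\) has at most a simple pole), not from Proposition~\ref{prop:residue-well-defined}, which is stated for holomorphic vector fields.
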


\begin{proof}
By Theorem~\ref{thm:log-energy},
\[
        Y(f_{\mathrm{par}})
        =
        \operatorname{Re}\phi_{G,\log}(Y).
\]
The formula follows from Proposition~\ref{prop:local-serre-formula}.
\end{proof}

\begin{remark}
The pole-order theorem and the construction of \(\phi_{G,\log}\) do not use
Assumption~\ref{ass:cusp-slice}. The assumption is used only in the proof of
the energy-variation formula, to make the gauge boundary terms vanish.
\end{remark}

\section*{Acknowledgment}

The author is supported by the INSPIRE faculty fellowship (Ref No.: IFA22-MA 186) funded by the DST, Govt. of India.

\end{document}